\documentclass[12pt]{article}
\usepackage{amsmath,amssymb,amsthm,bbm,natbib,color,hyperref}
\usepackage[margin=1.1in]{geometry}
\usepackage{tikz}
\usetikzlibrary{decorations.pathreplacing}
\usepackage{caption}
\captionsetup[figure]{font=small}
\usepackage{graphicx}
\usepackage{subcaption}
\usepackage{afterpage}
\usepackage[hang,flushmargin]{footmisc}
\usepackage{enumerate}
\usepackage{mathbbol}
\usepackage{mathabx}
\bibliographystyle{plainnat}

\newtheorem{theorem}{Theorem}
\newtheorem{lemma}{Lemma}

\newtheorem{assumption}{Assumption}
\theoremstyle{definition}

\newcommand{\R}{\mathbb{R}}
\newcommand{\eps}{\epsilon}
\newcommand{\EE}[1]{\mathbb{E}[{#1}]}

\newcommand{\EEst}[2]{\mathbb{E}[{#1}\  | \ {#2}]}
\newcommand{\PP}[1]{\mathbb{P}\{{#1}\}}
\newcommand{\PPst}[2]{\mathbb{P}\{{#1}\  | \ {#2}\}}
\newcommand{\bigEE}[1]{\mathbb{E}\left[{#1}\right]}
\newcommand{\bigEEst}[2]{\mathbb{E}\left[{#1}\  \middle| \ {#2}\right]}
\newcommand{\bigPP}[1]{\mathbb{P}\left\{{#1}\right\}}

\newcommand{\One}[1]{{\mathbbm{1}}\left\{{#1}\right\}}
\newcommand{\iidsim}{\stackrel{\textnormal{iid}}{\sim}}
\newcommand{\norm}[1]{\|{#1}\|}
\newcommand{\bignorm}[1]{\Big\|{#1}\Big\|}
\newcommand{\bX}{\bar{X}}

\newcommand{\reg}{\mathsf{R}}
\newcommand{\regh}{\widehat{\mathsf{R}}_n}
\newcommand{\Lcal}{\mathcal{L}}
\newcommand{\Lcalh}{\widehat{\mathcal{L}}_n}
\newcommand{\Lcalt}{\widetilde{\mathcal{L}}_n^\rho}
\newcommand{\Lt}{\widetilde{\mathcal{L}}^\rho}

\newcommand{\zLcal}{\mathcal{L}^{0/1}}
\newcommand{\zLcalh}{\widehat{\mathcal{L}}^{0/1}_n}
\newcommand{\zLcalt}{\widetilde{\mathcal{L}}^{0/1}_n}

\newcommand{\Fcal}{\mathcal{F}}
\newcommand{\Yt}{\widetilde{Y}}
\newcommand{\fh}{\widehat{f}}
\newcommand{\ft}{\widetilde{f}}
\newcommand{\wh}{\widehat{w}}
\newcommand{\wt}{\widetilde{w}}
\newcommand{\wts}{\widetilde{w}_\ast}
\DeclareMathOperator{\argmin}{argmin}
\title{Binary classification with corrupted labels}
\author{Yonghoon Lee and Rina Foygel Barber}
\date{\today}

\begin{document}
\maketitle

\begin{abstract}
In a binary classification problem where the goal is to fit an accurate predictor, the presence of corrupted labels in the training data set
may create an additional challenge. However, in settings where likelihood maximization is poorly behaved---for example,
if positive and negative labels are perfectly separable---then a small fraction of corrupted labels can improve performance
by ensuring robustness. In this work, we establish that in such settings, corruption acts as a form of regularization, and 
we compute precise upper bounds on estimation error in the presence of corruptions. Our results
suggest that the presence of corrupted data
points is beneficial only up to a small fraction of the total sample, scaling with the square root of the sample size.
\end{abstract}

\section{Introduction}\label{sec:intro}
Consider a classification problem, where our goal is to predict a binary label $Y\in\{\pm 1\}$ using information
captured by a feature vector $X\in\R^d$.
Based on $n$ training data points $(X_1,Y_1),\dots(X_n,Y_n)$,
the objective is to fit a classifier $\widehat{f}:\R^d\rightarrow\{\pm 1\}$ to this data, mapping a new test feature vector $X$ to a predicted
label $+1$ or $-1$. 

In many settings, inherent noise in the measurement process can introduce corruption into the observed
labels $Y_i$. For example, consider a medical application where features $X_i$ for patient $i$ determine
their likelihood of having a particular disease, and $Y_i\in\{\pm 1\}$ indicates presence or absence of the disease. Imperfect diagnostic
tests might mean that the observed label may differ from the true label $Y_i$. Writing $\Yt_i\in\{\pm 1\}$ to denote the observed
label, we might have $\PPst{\Yt_i=-1}{Y_i=+1}>0$ (if the diagnostic test has a nonzero rate of false negatives) and similarly $\PPst{\Yt_i=+1}{Y_i=-1}>0$
(indicating false positives).

\subsection{Setting and notation}
We begin by introducing some basic notation and definitions that we will use throughout.
Consider the following model for the triples $(X,Y,\Yt)$, where as before, $X\in\R^d$ denotes the feature vector, $Y\in\{\pm 1\}$ is the true
label (which we do not observe), and $\Yt\in\{\pm 1\}$ is the observed label (which may be corrupted, i.e., may differ from the true label):
\begin{align*}
X&\sim P_X \textnormal{\quad (a distribution on $\R^d$)},\\
Y |X &= \begin{cases} +1, &\text{ with prob.~}\eta(X),\\ -1, &\text{ with prob.~}1-\eta(X),\end{cases}\\
\Yt|X,Y &= \begin{cases} -Y, &\text{ with prob.~}\rho,\\ Y, &\text{ with prob.~}1-\rho.\end{cases}\end{align*}
Here $\eta(x)$ denotes the probability of a positive (true) label, 
\[\eta(x) = \PPst{Y=+1}{X=x},\]
while $\rho$ denotes the probability that the observed label is corrupted, assumed to be identical across all data points (the ``homogeneous noise'' setting).

In the classification problem, our goal is to define a classification rule that, given a feature
vector $x\in\R^d$, outputs a predicted label $+1$ or $-1$. The misclassification rate is minimized by predicting
$+1$ or $-1$ depending on whether $\eta(x)$ is above or below 0.5, respectively.
In a real data setting where $\eta(x)$ is unknown, the classification problem is 
typically addressed by fitting some function $f(x)\in\R$
and then predicting the label $\textnormal{sign}(f(x))$. We can interpret $f(x)$ as containing information
about both our prediction for the label (via the sign) and our confidence in this prediction (via the magnitude---values
$f(x)\approx 0$ indicate uncertainty).

Given a possible choice of the function $f$, the misclassification rate on the training data set $\{(X_i,Y_i):i=1,\dots,n\}$ is therefore given by
the empirical 0-1 loss,
\[\zLcalh(f) = \frac{1}{n}\sum_{i=1}^n \One{f(X_i)\cdot Y_i \leq 0},\]
while
\[\zLcalt(f) = \frac{1}{n}\sum_{i=1}^n \One{f(X_i)\cdot \Yt_i \leq 0}\]
measures misclassification on the {\em corrupted} training data set $\{(X_i,\Yt_i):i=1,\dots,n\}$.
Our goal is to ensure a low ``true'' misclassification rate, i.e., for predicting the label $Y$ for a new point with features $X$, that is,
\[\zLcal(f) = \PP{f(X)\cdot Y\leq 0},\]
where $(X,Y)$ is a new data point drawn from the same distribution as the original training data---that is, $X\sim P_X$, and $Y|X$ is
a label in $\{\pm1\}$ with probabilities determined by $\eta(X)$.

Since the zero/one loss is challenging to optimize, it is standard to use a {\em surrogate loss function}
$\ell: \R\rightarrow \R_+$,
typically chosen to be continuous, convex, and monotone nonincreasing.
For example, a logistic surrogate loss is given by
\[\ell(t) = \log(1 + e^{-t}),\]
while the hinge loss is given by
\[\ell(t) = \max\{0, 1 - t\}.\]
Given a sample of $n$ data points, $(X_1,Y_1),\dots,(X_n,Y_n)$, we then define the {\em empirical risk}
\[\Lcalh(f) = \frac{1}{n}\sum_{i=1}^n \ell(f(X_i)\cdot Y_i),\]
which is the average surrogate loss on the data set $\{(X_i,Y_i):i=1,\dots,n\}$,
and the {\em corrupted empirical risk}
\[\Lcalt(f) = \frac{1}{n}\sum_{i=1}^n \ell(f(X_i)\cdot \Yt_i),\]
which is the average surrogate  loss on the {\em corrupted} data set $\{(X_i,\Yt_i):i=1,\dots,n\}$. We will also write
\[\Lcal(f) = \EE{\ell(f(X)\cdot Y)},\]
the ``true'' risk of a function $f$, with expectation taken over a  data point $(X,Y)$ drawn from the same distribution as before, i.e., $X\sim P_X$, and label $Y|X$ drawn with probabilities determined by $\eta(X)$.

\subsection{Summary of questions and results}
The key question of this work is to compare the performance of the empirical risk minimizer,
\[\fh = \argmin_{f\in\Fcal}\Lcalh(f),\]
and its corrupted counterpart,
\[\ft = \argmin_{f\in\Fcal}\Lcalt(f),\]
where the minimization is taken over some predefined class of functions $\Fcal$ (for example, linear functions of $x$).
That is, how does the presence of corrupted labels affect the performance of the empirical risk minimizer?
In particular, we emphasize that the surrogate loss function is unchanged---we do not adjust $\ell$ or attempt to ``correct'' for the presence
of corruption (this is in contrast to much of the existing literature, which we review below).

Our findings can be summarized as follows. 
First, we find that {\bf corruption mimics regularization}---in particular, for a fixed function $f\in\Fcal$, the corrupted empirical risk $\Lcalt(f)$ is a {\em biased} estimate
of the true risk $\Lcal(f)$, but acts as an {\em unbiased} estimate of a penalized version of this risk,
\[\Lcal(f) +\lambda \reg(f)\]
where $\lambda>0$ is a penalty parameter depending  on the corruption level $\rho$, while  the regularization function is given by
\[\reg(f) = \bigEE{\frac{\ell(f(X)) + \ell(-f(X))}{2}},\]
the expected loss of the function $f$ under a completely random label.

While adding a penalty introduces bias into our estimator, it also serves to reduce variance, and for limited sample size $n$,
this reduction in variance may outweigh the bias. Our second finding is therefore that, in some settings, {\bf corruption may lead to reduced risk for finite sample size},
since it is effectively acting as a regularizer and can substantially reduce variance.

\subsection{Prior work}
 The problem of learning a classifier in the presence of corrupted labels has been
studied in many works in the recent literature. Here we give a very brief overview of the settings and types of results
considered. Consider the more general model
\begin{align*}
X&\sim P_X \textnormal{\quad (a distribution on $\R^d$)},\\
Y |X &= \begin{cases} +1, &\text{ with prob.~}\eta(X),\\ -1, &\text{ with prob.~}1-\eta(X),\end{cases}\\
\Yt|X,Y &= \begin{cases} -Y, &\text{ with prob.~}\rho(X,Y),\\ Y, &\text{ with prob.~}1-\rho(X,Y).\end{cases}\end{align*}
Here $\eta(x)$ denotes the probability of a positive (true) label as before, 
while $\rho(x,y)$ denotes the probability that the observed label is corrupted, 
\[\rho(x,y) = \PPst{\Yt\neq Y}{X=x,Y=y},\]
which now may depend on $x$ and/or $y$.

\citet{frenay2014comprehensive} and \citet{6685834} provide overviews of recent works on this problem. They categorize the existing methods to three types: label noise-robust models, data cleaning methods, and label noise-tolerant learning algorithms.

 The {\em homogeneous noise} setting assumes that $\rho(x,y)\equiv \rho$ for all $x,y$---that is, there is a constant probability
for each label to be corrupted. This is the setting we study in the present work. Under this setting, \citet{long2010random} study boosting algorithms and discuss negative consequences of label noise. \citet{van2015learning} consider ERM method and propose a label noise-robust loss function. \citet{manwani2013noise} discuss the noise-tolerance property of risk minimization. \citet{blanco2020mathematical} propose robust algorithms that apply relabeling and clustering to SVM.

The {\em class-dependent noise} setting assumes that $\rho(x,y)=\rho_y$ for all $x,y$---that is, the probability of corrupting a positive
label ($Y=+1$ but $\Yt=-1$) is constant with respect to the feature vector $x$, and similarly for a negative label, but these two probabilities may differ.
For example, in our earlier medical example, the diagnostic test might have different false positive and false negative rates, but these rates
themselves are constant across patients (i.e., independent of features such as age that might be included in the $X$ vector). \citet{Liu_2016}, \citet{scott2013classification}, and \citet{blanchard2016classification} study the consistency of the classifier under corruption, while \citet{reeve2019classification} focus on the minimax optimal learning rate of the corrupted estimator. Some recent works try correction of the loss function or the observed labels; see \citet{JMLR:v18:15-226}, \citet{JMLR:v18:16-315}, \citet{patrini2017making}, and \citet{lin2021learning}. Other recent works focus on studying or developing label noise-robust methods; see \citet{natarajan2013learning}, \citet{patrini2016loss}, \citet{reeve2019fast}, \citet{bootkrajang2012label}, and \citet{bootkrajang2014learning}.

Finally, the {\em general} setting---where $\rho(x,y)$ might vary with $x$---is studied by \citet{cannings2019classification}.
In particular, they examine a setting where the corrupted labels $\Yt_i$ are more ``clean'' than the original labels $Y_i$, in the sense that
the corruption mechanism defined by $\rho(x,y)$ acts to denoise labels near the decision boundary (i.e., $\eta(x)\approx 0.5$) Specifically, suppose that,
for values $x$ with $\eta(x)$ slightly higher than $0.5$, we have 
$\rho(x,+1) < \rho(x,-1)$ (that is, a label $Y_i=-1$ that ``should'' instead be positive, has a greater chance of being flipped to $\Yt_i=+1$),
and similarly if $\eta(x)$ is slightly lower than $0.5$ then $\rho(x,+1)>\rho(x,-1)$. 
In this case, the $\Yt_i$'s carry strictly more information for estimating the decision boundary, as compared to the $Y_i$'s; this setting is therefore
fundamentally different from the one we consider here, where homogeneous noise creates strictly noisier labels. \citet{menon2016learning} consider a similar general setting where they show that any consistent algorithm for noise free setting is also consistent under noisy labels under appropriate assumptions. Recent  discussions on the noise-tolerence and the robustness of the corrupted classification under this setting can be found in \citet{ghosh2015making} and \citet{cheng2020learning}.

\section{Main results}

\subsection{Intuition: corruption acts as regularization}
The key idea for studying the corrupted estimator through the framework of regularization, is to find a regularizer that
matches the expected behavior of the corruption. 
In order to do this, we first find a different representation of the corruption variables:
define
\[R_i \iidsim \textnormal{Bernoulli}(2\rho)\text{ and }Z_i \iidsim \textnormal{Uniform}\{\pm 1\},\]
drawn independently from each other and independently of the clean data. Then let
\[\Yt_i =  (1 -R_i) \cdot Y_i + R_i \cdot Z_i.\]
That is, $R_i$ determines whether the label $Y_i$ will be replaced by a random sign, and $Z_i$ provides this random sign.
Examining this construction we can see that this yields the same distribution of the corrupted labels as the original definition.
We can then write the corrupted loss as
\[\Lcalt(f) = \frac{1}{n}\sum_{i=1}^n \ell(f(X_i)\cdot \Yt_i) 
= \frac{1}{n}\sum_{i=1}^n (1-R_i) \cdot \ell(f(X_i)\cdot Y_i)  + \sum_{i=1}^n R_i \cdot  \ell(f(X_i)\cdot Z_i).\]
Next, we treat $f$ as fixed, and then condition on the clean data and marginalize over the distribution of the $R_i$'s and $Z_i$'s:
\begin{multline*}\bigEEst{\Lcalt(f)}{X_{1:n},Y_{1:n}}\\
= \frac{1}{n}\sum_{i=1}^n \EE{1-R_i} \cdot \ell(f(X_i)\cdot Y_i)  + \frac{1}{n}\sum_{i=1}^n \EE{R_i} \cdot  \EEst{\ell(f(X_i)\cdot Z_i)}{X_i}\\
= (1-2\rho)\cdot\Lcalh(f) + \rho \cdot \frac{1}{n}\sum_{i=1}^n \big(\ell(f(X_i))+\ell(-f(X_i))\big).\end{multline*}
Recall the definition of the regularizer,
\[\reg(f) =\bigEE{\frac{\ell(f(X))+\ell(-f(X))}{2}},\]
the expected loss of $f$ on purely random labels. 
We can also consider an empirical version,
\[\regh(f) = \frac{1}{n}\sum_{i=1}^n\frac{\ell(f(X_i))+\ell(-f(X_i))}{2}.\]
We therefore see that
\[\bigEEst{\Lcalt(f)}{(X_i,Y_i),i=1,\dots,n} = (1-2\rho) \cdot \Big(\Lcalh(f) + \lambda \regh(f)\Big),\]
where $\lambda = \frac{2\rho}{1-2\rho}$.
Finally, for any fixed function $f$, we have
\[\EE{\Lcalh(f) + \lambda \regh(f)} = \Lcal(f) + \lambda \reg(f),\]
by definition. Therefore, we can view the corrupted empirical risk minimizer $\ft$ as a sample estimate
of the minimizer of the penalized loss $ \Lcal(f) + \lambda \reg(f)$. 

To summarize our findings so far, we have seen that $\ft = \argmin_{f\in\Fcal}\Lcalt(f)$
can be described in two ways:
\begin{itemize}
\item Fixing the training data $\{(X_i,Y_i):i=1,\dots,n\}$ and taking an expectation over the corruption mechanism (the $R_i$'s and $Z_i$'s above),
we see that
$\Lcalt(f)$ has (conditional) expected value $\Lcalh(f) + \lambda \regh(f)$, a penalized empirical risk.
\item Taking expectations over both the original data and the random corruption,
$\Lcalt(f)$ has expected value $\Lcal(f) + \lambda \reg(f)$, a penalized true risk.
\end{itemize}

\subsection{Results for the linear setting}
Next, we will examine the implications of this relationship between corruption and regularization, on the goals of minimizing risk.
From this point on, we will restrict our discussion to the setting where $\Fcal$ consists of {\em linear} functions,
\[\Fcal = \{x\mapsto w^\top x: w\in\R^d\},\]
in order to be able to achieve precise results.
Consequently we will shift our notation from functions $f$ to vectors $w$. 
Specifically, for each $w\in\R^d$ we will define the population-level loss and regularized loss,
\[
\Lcal(w) = \EE{\ell(X^\top w\cdot Y)}\textnormal{\quad and \quad}
\Lt(w) = \EE{\ell(X^\top w\cdot Y)} + \frac{2\rho}{1-2\rho}\cdot\reg(w),\]
where
\[\reg(w) = \bigEE{\frac{\ell(X^\top w)+\ell(-X^\top w)}{2}} = \frac{\Lcal(w)+\Lcal(-w)}{2},\]
as well as the empirical loss and empirical corrupted loss,
\[
\Lcalh(w) =\frac{1}{n}\sum_{i=1}^n \ell(X_i^\top w \cdot Y_i)\textnormal{\quad and \quad}
\Lcalt(w) =\frac{1}{n}\sum_{i=1}^n \ell(X_i^\top w \cdot \Yt_i).\]
We will also define population-level minimizers
\begin{equation}\label{eqn:pop_min}w_* = \argmin_{w\in\R^d}\Lcal(w)\textnormal{\quad and \quad}\wts^\rho = \argmin_{w\in\R^d}\Lt(w),\end{equation}
and empirical minimizers
\begin{equation}\label{eqn:emp_min}\wh_n = \argmin_{w\in\R^d}\Lcalh(w)\text{\quad and\quad}\wt_n^\rho=\argmin_{w\in\R^d}\Lcalt(w),\end{equation}
whenever these minimizers exist. (Note that, in some settings,
the loss or its empirical or corrupted counterpart may have no minimizer---for example, logistic loss, where the positive and negative labels can be perfectly separated.) For each of the four minimization problems, if the minimizer exists but is not unique, our results will apply to any minimizer
(e.g., $\wts^\rho$ denotes any element of the set $\argmin_{w\in\R^d}\Lt(w)$, etc).

It is well-known that regularization may help reduce risk, even at the cost of increasing bias due to the influence
of the regularization function.
As discussed earlier, since corruption mimics regularization, in many settings we empirically observe
that corruption reduces the risk---that is, $\Lcal(\wt^\rho_n) < \Lcal(\wh_n)$, even though the corruption introduces bias.
We will next study why this phenomenon occurs, by establishing bounds on the loss $\Lcal(\wt^\rho_n)$ of the corrupted estimator.

\subsubsection{Theoretical results}
We begin by defining our assumptions. First, we require some conditions on the loss function $\ell$:
\begin{assumption}\label{asm:ell}
The loss function $\ell$ is nonnegative, nonincreasing, convex, and $L$-Lipschitz.
Furthermore,  $\ell$
 is strictly decreasing on negative values, with
\[\ell(t) \geq \ell(0)+ \gamma|t|\textnormal{ for all $t\leq 0$}\]
for some $\gamma>0$, and has a subexponential decay for positive values,
\[\ell(t)\leq c_1 e^{-c_2t}\text{ for all $t\geq 0$},\]
for some $c_1,c_2>0$.
\end{assumption}
\noindent The last two conditions ensure
that the loss function enacts a strong penalty if $X^\top w$ predicts the sign of $Y$ incorrectly (i.e., $\ell(t)$ is large for $t< 0$), 
but decays quickly if $X^\top w$ predicts the sign of $Y$ correctly (i.e., $\ell(t)$ is small for $t> 0$). These conditions are satisfied
by many well-known examples, for instance:
\begin{itemize}
\item The logistic loss $\ell_t = \log(1+e^{-t})$ satisfies Assumption~\ref{asm:ell} with $\gamma =\frac{1}{2}$ and $L=c_1=c_2=1$.
\item The hinge loss $\ell_t =  (1-t)_+$ satisfies Assumption~\ref{asm:ell} with $L=\gamma=c_1=c_2=1$.
\end{itemize}
We will also need some weak assumptions on the distribution of the feature vector $X$:
\begin{assumption}\label{asm:X}
For some $a_0,a_1,a_2>0$, it holds that
\[\EE{e^{a_0|X^\top u|^2}} \leq a_1\]
and 
\[\bigEE{e^{-t|X^\top u|}}\leq \frac{a_2}{t}\text{ for all $t>0$}.\]
for all unit vectors $u\in\mathbb{S}^{d-1}$.
\end{assumption}
\noindent For example, this assumption is satisfied
by any multivariate Gaussian distribution with mean $\mu$ and
covariance $\Sigma$, with the parameters $a_0,a_1,a_2$ depending on $\norm{\mu}$
and on the largest and smallest eigenvalues of $\Sigma$, but not on the dimension $d$.

Under these assumptions, our main result establishes a bound on the loss of the corrupted estimator $\wt^\rho_n$.
\begin{theorem}\label{thm:main} Suppose that Assumptions~\ref{asm:ell} and~\ref{asm:X} hold.
Let $n\geq 2$ and fix any $\alpha>0$. 
Suppose $\rho\in(0,\tfrac{1}{2})$ satisfies
\[\rho\geq C \cdot\frac{d\log n}{n}.\]
Then with probability at least $1-n^{-\alpha}$, the set $ \argmin_{w\in\R^d}\Lcalt(w)$ is nonempty,
and for all $\wt_n^\rho \in \argmin_{w\in\R^d}\Lcalt(w)$ it holds that
\[\Lcal(\wt_n^\rho)\leq \inf_{w\in\R^d}\Lcal(w) + C' \left[\rho^{1/2} + \rho^{-1/2}\cdot \sqrt{\frac{d\log n}{n}}\,\right].\]
Here $C,C'$ depend only on $\alpha$ and on the constants in Assumptions~\ref{asm:ell} and~\ref{asm:X}, but not on $n$, $d$, or $\rho$.
\end{theorem}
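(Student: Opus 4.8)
The plan is to decompose the excess risk of $\wt_n^\rho$ into three pieces by comparing to the population regularized minimizer $\wts^\rho$: writing $\Delta = \Lcal(\wt_n^\rho) - \inf_w \Lcal(w)$, we bound
\[
\Delta \leq \underbrace{\big[\Lcal(\wt_n^\rho) - \Lt(\wt_n^\rho)\big]}_{\text{(i) bias correction, sign favorable}} + \underbrace{\big[\Lt(\wt_n^\rho) - \Lt(\wts^\rho)\big]}_{\text{(ii) estimation error}} + \underbrace{\big[\Lt(\wts^\rho) - \Lcal(w_*)\big]}_{\text{(iii) approximation error}}.
\]
Since $\Lt(w) = \Lcal(w) + \frac{2\rho}{1-2\rho}\reg(w)$ and $\reg(w) \geq 0$, term (i) is at most $0$ once we drop the nonnegative regularizer, and term (iii) is at most $\frac{2\rho}{1-2\rho}\reg(w_*) \lesssim \rho\,\reg(w_*)$, which by Assumption~\ref{asm:ell} (and the subexponential tail in Assumption~\ref{asm:X}) is $O(\rho)$ uniformly — this gives the $\rho^{1/2}$ term, in fact an even better $O(\rho)$ bound, so the $\rho^{1/2}$ in the theorem must be absorbing a looser estimate elsewhere. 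The real work is in term (ii), which we control by a uniform convergence argument: $\Lt(\wt_n^\rho) - \Lt(\wts^\rho) \leq 2\sup_{w\in W}|\Lcalt(w) - (1-2\rho)\Lt(w)|/(1-2\rho)$ over an appropriate region $W\subseteq\R^d$, using that $\wt_n^\rho$ minimizes $\Lcalt$ and $\mathbb{E}[\Lcalt(w)] = (1-2\rho)\Lt(w)$ for fixed $w$.

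The central difficulty — and the reason the assumptions are shaped the way they are — is that for unbounded loss (e.g.\ logistic) the minimizers could a priori run off to infinity, so one cannot take the uniform convergence over all of $\R^d$. The key localization step is to show that $\|\wt_n^\rho\|$ is bounded with high probability by something like $O(\rho^{-1})$ or $O(\log(1/\rho))$: because the regularizer $\reg(w)$ grows (the term $\ell(-X^\top w)$ blows up along directions where $X^\top w$ is large negative, and by Assumption~\ref{asm:ell}'s lower bound $\ell(t)\geq \ell(0)+\gamma|t|$ this growth is at least linear in $\|w\|$ after accounting for the feature spread), the penalized population loss $\Lt$ is coercive with a quantitative modulus depending on $\rho$. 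Transferring this coercivity to the empirical corrupted loss $\Lcalt$ requires a uniform concentration bound on $\Lcalt(w) - \mathbb{E}[\Lcalt(w)]$ on the sphere of radius $\|w\|$, for which I would use the Lipschitz property of $\ell$, the sub-Gaussian-type control $\mathbb{E}[e^{a_0|X^\top u|^2}]\leq a_1$ from Assumption~\ref{asm:X}, a standard $\epsilon$-net over $\mathbb{S}^{d-1}$ (contributing the $d\log n$ factor), and a Bernstein/Hoeffding bound per net point with a union bound to get the $n^{-\alpha}$ failure probability. The condition $\rho \geq C d\log n / n$ is exactly what is needed so that the penalty $\lambda\,\regh \asymp \rho$ dominates the $\sqrt{d\log n/n}$ fluctuations and the coercivity argument closes.

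Once localization gives $\|\wt_n^\rho\|, \|\wts^\rho\| \leq \Theta(\rho)$ with high probability for some explicit radius function $\Theta$, I would run the uniform convergence on the ball $W = \{w : \|w\| \leq \Theta(\rho)\}$: a Lipschitz-plus-net argument gives $\sup_{w\in W}|\Lcalt(w) - \mathbb{E}[\Lcalt(w)]| \lesssim L\cdot\Theta(\rho)\cdot\sqrt{d\log n / n}$ up to lower-order terms, and dividing by $(1-2\rho)$ (bounded since $\rho<1/2$, and in the relevant regime $\rho$ is small) yields term (ii) $\lesssim \Theta(\rho)\sqrt{d\log n/n}$. With $\Theta(\rho) \asymp \rho^{-1/2}$ — which is the natural scale at which the linear-growth penalty $\rho\|w\|$ balances against the $O(1)$ unpenalized loss variation, giving a self-consistent radius — this produces the claimed $\rho^{-1/2}\sqrt{d\log n/n}$ term, and combined with the $O(\rho)\leq O(\rho^{1/2})$ approximation term we recover the stated bound. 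The main obstacle I anticipate is making the localization radius quantitative and uniform: one must argue that outside a ball of radius $\Theta(\rho)$ the empirical corrupted loss strictly exceeds its value at, say, $w=0$ (where $\Lcalt(0) = \ell(0)$), and this requires the lower bound $\ell(t)\geq\ell(0)+\gamma|t|$ to be leveraged together with a high-probability lower bound on $\frac1n\sum_i |X_i^\top u|$ for all directions $u$ simultaneously — precisely the role of the second inequality $\mathbb{E}[e^{-t|X^\top u|}]\leq a_2/t$ in Assumption~\ref{asm:X}, which prevents the feature mass from concentrating near any hyperplane and thus guarantees the regularizer genuinely grows in every direction.
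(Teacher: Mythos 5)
There is a genuine gap, and it sits exactly at the step that produces the $\rho^{1/2}$ term. You claim the approximation error (your term (iii)) is at most $\frac{2\rho}{1-2\rho}\reg(w_*)=O(\rho)$ ``uniformly,'' and conclude that the theorem's $\rho^{1/2}$ must be slack from elsewhere. This is wrong for two reasons. First, $w_*=\argmin_{w}\Lcal(w)$ need not exist: the separable case (e.g.\ logistic loss with perfectly separable labels) is precisely the regime the theorem is designed to cover, and there $\inf_w\Lcal(w)$ is approached only as $\norm{w}\to\infty$. Second, even along near-minimizing sequences the regularizer is not $O(1)$: by Lemma~\ref{lem:Rw}, $\reg(w)\asymp\norm{w}$ up to constants, so $\rho\,\reg(w)$ for a near-optimal comparator $w$ is of order $\rho\norm{w}$, which is not $O(\rho)$ with constants depending only on the constants in Assumptions~\ref{asm:ell} and~\ref{asm:X}. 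The paper's proof of Lemma~\ref{lem:main} handles this by a two-case comparison against an arbitrary $w$: if $\norm{w}\leq c\rho^{-1/2}$ the penalty costs $O(\rho\cdot\rho^{-1/2})=O(\rho^{1/2})$; if $\norm{w}>c\rho^{-1/2}$, one replaces $w$ by its rescaling to radius $t=c\rho^{-1/2}$ and pays an extra $\EE{\ell(t|X^\top u|)}\leq c_1\EE{e^{-c_2t|X^\top u|}}\leq c_1a_2/(c_2t)=O(\rho^{1/2})$, using the subexponential decay of $\ell$ together with the anti-concentration condition $\EE{e^{-t|X^\top u|}}\leq a_2/t$. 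The $\rho^{1/2}$ is therefore the genuine worst-case approximation error of corruption-as-regularization, not a loose estimate, and your argument as stated cannot yield constants independent of the distribution (it would secretly depend on $\norm{w_*}$, which may be infinite).

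Two secondary points. Your normalization $\Lt(w)=\Lcal(w)+\frac{2\rho}{1-2\rho}\reg(w)$ forces a division by $1-2\rho$ in the estimation term (and puts $\frac{2\rho}{1-2\rho}$ in the approximation term), which is not uniformly bounded over $\rho\in(0,\tfrac12)$ as the theorem requires; the paper instead works with $\Lt(w)=\EE{\ell(X^\top w\cdot\Yt)}=(1-\rho)\Lcal(w)+\rho\Lcal(-w)$, so no such division appears, and converts back to $\Lcal(\wt_n^\rho)$ at the very end via $(1-\rho)\geq\tfrac12$ (this conversion plays the role of your ``term (i) $\leq 0$'' step). Also, your heuristic for the localization radius---balancing a linear penalty $\rho\norm{w}$ against an $O(1)$ loss variation---would give radius $\rho^{-1}$, not $\rho^{-1/2}$; the correct balance, as in the proofs of Lemma~\ref{lem:main} and Lemma~\ref{lem:conc}, is between the linear gain $\asymp\gamma\rho t$ coming from the $\rho$-fraction of flipped labels and the residual decay $\asymp c_1a_2/(c_2t)$ of the loss on correctly classified points, which is again the exponential-tail/anti-concentration pairing, and on the empirical side requires the uniform lower bound~\eqref{eqn:conc1} (this is where $\rho\geq C\,d\log n/n$ enters). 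Your remaining outline (localize to radius $\asymp\rho^{-1/2}$, uniform convergence at rate $\rho^{-1/2}\sqrt{d\log n/n}$ via symmetrization and nets) does match the paper's structure, but it is a plan rather than a proof, and the step you flag as the main obstacle is exactly what Lemmas~\ref{lem:main} and~\ref{lem:conc} supply.
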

We can see an immediate tradeoff in the upper bound in Theorem~\ref{thm:main}.
The $\rho^{1/2}$ term acts as an ``approximation error'',
where a large corruption proportion $\rho$ leads to a potentially large gap between the loss of the regularized estimator, $\Lcal(\wts^\rho)$,
and the minimum possible loss without regularization, $\inf_{w\in\R^d}\Lcal(w)$. 
On the other hand, the $\rho^{-1/2}\cdot \sqrt{\frac{d\log n}{n}}$ term is the ``estimation error'', 
which is large when the corruption proportion $\rho$ is small (i.e., insufficient regularization).
 The resulting upper bound on risk is minimized when the corruption level scales as  $\rho\asymp \big(\frac{d\log n}{n}\big)^{1/2}$, leading to an upper bound
 on excess risk scaling as  $\asymp \big(\frac{d\log n}{n}\big)^{1/4}$.  This suggests that even a very small fraction of corrupted
 entries can lead to a reduced risk. In contrast, the uncorrupted minimization problem may not behave well under these weak assumptions---for instance,
 if the labels are perfectly linearly separable (as might be the case if, e.g., $Y|X$ follows a logistic regression with very high signal strength),
 then a minimizer does not even exist (i.e., $\argmin_{w\in\R^d}\Lcalh(w)$ is empty).

 Of course, the result of Theorem~\ref{thm:main} is an upper bound on the loss, and may be loose for certain examples; the value of $\rho$ that minimizes
 the upper bound (i.e.,  $\rho\asymp \big(\frac{d\log n}{n}\big)^{1/2}$) might not be the same as the value of $\rho$ that minimizes the loss itself.
In particular, the result can be viewed as a ``worst case'' bound that holds even when the unregularized loss has no minimizer (such as logistic
regression with perfectly separable labels, as mentioned above); in problems where this is not the case, regularization is not as critical, and a smaller value of $\rho$ (or even $\rho=0$) may perform better.

\subsubsection{Proof of Theorem~\ref{thm:main}}
Our first step is to examine some properties of the regularized population minimizer $\wts^\rho$
and its empirical counterpart, the corrupted estimator $\wt_n^\rho$.
\begin{lemma}\label{lem:main}
Suppose Assumptions~\ref{asm:ell} and~\ref{asm:X} hold. Fix any $\rho\in(0,\frac{1}{2})$. Then
$\argmin_{w\in\R^d}\Lt(w)$ is nonempty, and any $\wts^\rho\in\argmin_{w\in\R^d}\Lt(w)$ 
must satisfy $\norm{\wts^\rho}\leq C_0\rho^{-1/2}$ and
\[\Lcal(\wts^\rho)\leq \inf_{w\in\R^d}\Lcal(w) + C_1 \rho^{1/2}.\]
Moreover,
 for any $\alpha>0$, 
  if $n\geq 2$ and $\rho \geq C\cdot \frac{d\log n}{n}$ then with probability at least $1-n^{-\alpha}$ it holds that
$\argmin_{w\in\R^d}\Lcalt(w)$ is nonempty, that any $\wt_n^\rho\in\argmin_{w\in\R^d}\Lcalt(w)$ 
must satisfy $\norm{\wt_n^\rho}\leq C_0\rho^{-1/2}$, and that
\[\sup_{\norm{w}\leq C_0\rho^{-1/2}}\left|\Lcalt(w)-\Lt(w)\right|\leq C_2 \rho^{-1/2}\sqrt{\frac{d\log n}{n}}.\]
Here $C,C_0,C_1,C_2$ depend on $\alpha$  and on the constants in Assumptions~\ref{asm:ell} and~\ref{asm:X},
but not on $n$, $d$, or $\rho$.
\end{lemma}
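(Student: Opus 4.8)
The plan is to split Lemma~\ref{lem:main} into two parts: (i) the deterministic, population-level claims about $\wts^\rho$, and (ii) the high-probability uniform concentration bound that then transfers the population facts to the empirical minimizer $\wt_n^\rho$. I would begin with the population minimizer. The key observation is that $\Lt(w) = \Lcal(w) + \lambda\reg(w)$ with $\lambda = \frac{2\rho}{1-2\rho}$, and that $\reg(w) = \frac{\Lcal(w)+\Lcal(-w)}{2} \geq \frac12 \ell(0)$ always, but more usefully grows linearly in $\norm{w}$: using the lower bound $\ell(t)\geq \ell(0)+\gamma|t|$ for $t\leq 0$ from Assumption~\ref{asm:ell}, for any fixed $w$ at least one of $\pm X^\top w$ is $\leq 0$, so $\reg(w)\geq \ell(0) + \frac{\gamma}{2}\EE{|X^\top w|}$. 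The second part of Assumption~\ref{asm:X} gives $\EE{e^{-t|X^\top u|}}\leq a_2/t$, which lower-bounds $\EE{|X^\top u|}$ by a positive constant (e.g., via $\EE{|X^\top u|}\geq t^{-1}\log(t/a_2)$ optimized over $t$, or a Markov-type argument), hence $\reg(w) \geq c_3\norm{w}$ for $\norm{w}$ large. Since $\Lcal\geq 0$, this yields $\Lt(w)\geq \lambda c_3\norm{w}\gtrsim \rho\norm{w}$, so $\Lt$ is coercive and a minimizer exists. To get the norm bound $\norm{\wts^\rho}\leq C_0\rho^{-1/2}$: evaluate $\Lt$ at $w=0$ to get $\Lt(\wts^\rho)\leq \Lt(0) = \ell(0)(1+\lambda)$, and also evaluate at a good reference point. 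The natural reference is a rescaled version of $w_*$ (or a near-minimizer of $\Lcal$ when $w_*$ fails to exist); picking $w = s\cdot v$ for a unit vector $v$ approximately optimal for $\Lcal$ and tuning the scale $s\asymp\rho^{-1/2}$ so that $\Lcal(sv)$ is small (using the subexponential decay $\ell(t)\leq c_1e^{-c_2 t}$ for the correctly-classified mass) while $\lambda\reg(sv)\lesssim \rho\cdot s \asymp \rho^{1/2}$. Comparing $\Lt(\wts^\rho)\leq \Lt(sv)\lesssim \rho^{1/2}$ against the coercivity bound $\Lt(\wts^\rho)\gtrsim \rho\norm{\wts^\rho}$ forces $\norm{\wts^\rho}\lesssim\rho^{-1/2}$, and reading off $\Lcal(\wts^\rho)\leq \Lt(\wts^\rho)\leq \inf_w\Lcal(w) + C_1\rho^{1/2}$ gives the excess-risk bound. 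The delicate point here is handling the case where $\argmin\Lcal$ is empty: one must work with a sequence of near-minimizers and make the scale/tail tradeoff quantitative, which is where the $\rho^{1/2}$ rate is pinned down.

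For part (ii), the goal is the uniform deviation bound $\sup_{\norm{w}\leq C_0\rho^{-1/2}}|\Lcalt(w)-\Lt(w)|\leq C_2\rho^{-1/2}\sqrt{\frac{d\log n}{n}}$. I would write $\Lcalt(w)-\Lt(w)$ as an empirical process over the ball $B = \{\norm{w}\leq C_0\rho^{-1/2}\}$ indexed by the function class $\{(x,\yt)\mapsto \ell(x^\top w\cdot \yt) : w\in B\}$ (treating the corruption as part of the data, i.e., the i.i.d.\ sample is the triples, and using that $\EE{\Lcalt(w)} = \Lt(w)$ from the computation in Section~2.1). Since $\ell$ is $L$-Lipschitz and $x\mapsto x^\top w$ is linear, the loss composed with $w$ is $L\norm{x}$-Lipschitz in $w$; standard symmetrization plus a vector-contraction / Rademacher-complexity argument for linear classes bounds the expected supremum by $\lesssim L\cdot\mathrm{radius}(B)\cdot\EE{\norm{\bar X}}/\sqrt n \lesssim \rho^{-1/2}\sqrt{d/n}$, where the $\sqrt d$ comes from $\EE{\|\frac1n\sum \pm X_i\|}$ which is controlled by the sub-Gaussian-type tail $\EE{e^{a_0|X^\top u|^2}}\leq a_1$ in Assumption~\ref{asm:X} (this bounds the operator norm of the sample second-moment matrix, giving the $\sqrt d$). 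The $\log n$ factor and the $n^{-\alpha}$ failure probability come from a concentration step — either a bounded-differences / McDiarmid argument after truncating the heavy tail of $\ell(x^\top w\cdot\yt)$ at a level $\asymp\log n$ (legitimate because $\ell(t)\leq c_1e^{-c_2t}$ on the correct side and $\ell$ is $L$-Lipschitz so grows only linearly on the wrong side, and $|x^\top w|$ concentrates), or a direct Bernstein/union-bound over an $\eps$-net of $B$. One then needs a separate short argument that $\argmin\Lcalt$ is nonempty and every minimizer lies in $B$: this mirrors the population coercivity argument, using that $\regh(w)\geq c_3\norm{w}$ for large $\norm{w}$ on the high-probability event where $\frac1n\sum|X_i^\top u|$ is uniformly bounded below — this lower bound on an empirical average of $|X_i^\top u|$ over all unit $u$ is itself a small uniform-concentration statement, provable by a net argument using the light upper tail from Assumption~\ref{asm:X} together with the anti-concentration lower bound on $\EE{|X^\top u|}$. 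Evaluating $\Lcalt$ at $0$ and at the reference point $sv$ as before, and using the uniform deviation bound to relate $\Lcalt$ and $\Lt$, confines $\wt_n^\rho$ to $B$.

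I expect the main obstacle to be the interplay between the \emph{heavy one-sided} behaviour of $\ell$ (it can be unbounded on the misclassified side, merely Lipschitz) and the need for exponential-tail concentration: a crude bound treating $\ell$ as bounded fails, so one must exploit that for $w$ in the radius-$\rho^{-1/2}$ ball the quantity $|X^\top w|$ is at most $O(\rho^{-1/2}\sqrt{\log n})$ with probability $1-n^{-\alpha'}$ (from Assumption~\ref{asm:X}), truncate there, and absorb the truncation error — this is what threads the $\rho^{-1/2}$ and the $\sqrt{\log n}$ into the final rate simultaneously, and getting the dependence on $\rho$ clean (rather than, say, $\rho^{-1}$) requires care. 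A secondary, more bookkeeping-heavy obstacle is the no-minimizer case for $\Lcal$: all comparison arguments must be phrased via near-minimizers and the scale-tail tradeoff made fully quantitative, since that is exactly the regime the theorem is designed to cover.
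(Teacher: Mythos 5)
There is a genuine gap in both halves of your plan, and in both cases it is quantitative rather than cosmetic. For the population part, your norm bound rests on two claims: coercivity $\Lt(w)\gtrsim \rho\norm{w}$ (obtained by discarding $\Lcal\geq 0$) and a reference-point bound $\Lt(sv)\lesssim \rho^{1/2}$ with $s\asymp\rho^{-1/2}$. The second claim is false in general: on the misclassified event $X^\top v\cdot Y<0$ the loss grows at least like $\ell(0)+\gamma s|X^\top v|$, so scaling up a fixed direction does not make $\Lcal(sv)$ small unless that direction separates perfectly; indeed whenever $\inf_w\Lcal(w)=\Theta(1)$ (any genuinely non-separable problem, e.g.\ $\eta$ bounded away from $0$ and $1$) one has $\Lt(\wts^\rho)\geq(1-2\rho)\inf_w\Lcal(w)=\Theta(1)$, contradicting $\lesssim\rho^{1/2}$. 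With the corrected upper bound $\inf_w\Lcal(w)+O(\rho^{1/2})$, your coercivity inequality only forces $\norm{\wts^\rho}\lesssim\rho^{-1}$, not $\rho^{-1/2}$. The paper gets $\rho^{-1/2}$ by a different mechanism: a radial increment inequality $\Lt(tu)>\Lt(0.5tu)$ for every unit $u$ at $t=C_0\rho^{-1/2}$, balancing the gain $\gtrsim\gamma t\rho\,\EE{|X^\top u|}$ on the sign-mismatched region against the residual decay $\lesssim c_1a_2/(c_2 t)$ on the matched region (this is where $t\asymp\rho^{-1/2}$ is pinned down), and then uses convexity/continuity of $\Lt$. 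The excess-risk bound is then proved for \emph{every} comparison point $w$ by a two-case argument (if $\norm{w}\leq c\rho^{-1/2}$, use Lipschitzness of $\Lcal$; otherwise radially truncate $w$ to radius $c\rho^{-1/2}$ and show the truncation costs only $O(1/t)=O(\rho^{1/2})$), which is exactly how the possibly-empty $\argmin\Lcal$ is handled — your proposal names this difficulty but the mechanism you offer (rescaled near-optimal unit vector plus subexponential tail) only works in the separable case. A smaller omission in the same part: $\Lcal(\wts^\rho)\leq\Lt(\wts^\rho)$ is not automatic; it needs $\Lcal(\wts^\rho)\leq\Lcal(-\wts^\rho)$, which the paper extracts from $\Lt(\wts^\rho)\leq\Lt(-\wts^\rho)$ and the identity $\Lt(w)=(1-\rho)\Lcal(w)+\rho\Lcal(-w)$.

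For the empirical part, your uniform-deviation sketch (symmetrization, contraction, Rademacher complexity of the linear class, then a concentration step) matches the paper's proof of the deviation bound, with your truncation-plus-McDiarmid step being a reasonable alternative to the paper's Lipschitz-concentration argument. The gap is in the nonemptiness/norm bound for $\wt_n^\rho$ under the stated condition $\rho\geq C\,d\log n/n$. Your plan lower-bounds $\regh$, but $\Lcalt(w)$ equals $(1-2\rho)\Lcalh(w)+2\rho\regh(w)$ only in conditional expectation over the corruption, so empirical coercivity of $\regh$ says nothing directly about $\Lcalt$; and if you instead route through the uniform deviation bound on the ball of radius $\rho^{-1/2}$, the deviation $\rho^{-1/2}\sqrt{d\log n/n}$ must be beaten by the coercive margin $\asymp\rho^{1/2}$, which requires $\rho\gtrsim\sqrt{d\log n/n}$ — strictly stronger than the lemma's hypothesis. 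The paper avoids this by proving two bespoke uniform concentration statements with better error scaling — a lower bound $\inf_{u}\frac1n\sum_i\max\{0,-X_i^\top u\,\Yt_i\}\geq r_1\rho-r_2\,\frac{d\log n}{n}$ (note the corrupted labels supply the $\rho$ factor) and an upper bound on $\sup_u\frac1n\sum_i e^{-t|X_i^\top u|}$ — and then repeats the radial increment argument for $\Lcalt$ itself. You would need these (or equivalent) ingredients to recover the theorem under $\rho\geq C\,d\log n/n$.
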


Now we prove the theorem. 
By Lemma~\ref{lem:main},
with probability at least $1-n^{-\alpha}$,
for any $\wts^\rho\in\argmin_{w\in\R^d}\Lt(w)$ and all  $\wt_n^\rho\in\argmin_{w\in\R^d}\Lcalt(w)$ it holds that
$\Lcal(\wts^\rho)\leq \inf_{w\in\R^d}\Lcal(w) + C_1 \rho^{1/2}$ and
that
\[\max\left\{\left|\Lcalt(\wts^\rho)-\Lt(\wts^\rho)\right|, \ \left|\Lcalt(\wt_n^\rho)-\Lt(\wt_n^\rho)\right|\right\}\leq C_2 \rho^{-1/2}\sqrt{\frac{d\log n}{n}}.\]
 From now on, we assume that these events all hold. Then we have
\begin{align*}
\Lt(\wt_n^\rho) 
&=\Lt(\wts^\rho) + \left(\Lcalt(\wts^\rho)  -\Lt(\wts^\rho)\right) + \left( \Lcalt(\wt_n^\rho) -\Lcalt(\wts^\rho) \right) + \left(\Lt(\wt_n^\rho) - \Lcalt(\wt_n^\rho) \right) \\
&\leq \Lt(\wts^\rho) + \left( \Lcalt(\wt_n^\rho) -\Lcalt(\wts^\rho) \right)  + 2C_2\rho^{-1/2}\cdot \sqrt{\frac{d\log n}{n}}\\
&\leq \Lt(\wts^\rho)   + 2C_2\rho^{-1/2}\cdot \sqrt{\frac{d\log n}{n}} \textnormal{\quad by optimality of $\wt_n^\rho$}\\
&\leq \inf_{w\in\R^d}\Lcal(w) + C_1 \rho^{1/2} + 2C_2\rho^{-1/2}\cdot \sqrt{\frac{d\log n}{n}}\\
&\leq \inf_{w\in\R^d}\Lcal(w) + \frac{C'}{2} \left[\rho^{1/2} + \rho^{-1/2}\cdot \sqrt{\frac{d\log n}{n}}\,\right],
\end{align*}
where we set $C' = \max\left\{2C_1, 4C_2\right\}$.
Next, by definition of $\Lt$, we have
\begin{multline*}\Lt(\wt_n^\rho)  -  \inf_{w\in\R^d}\Lcal(w) 
= (1-\rho)\cdot \big[\Lcal(\wt_n^\rho)  -  \inf_{w\in\R^d}\Lcal(w) \big] + \rho \cdot \big[\Lcal(-\wt_n^\rho)  -  \inf_{w\in\R^d}\Lcal(w) \big] \\
\geq \frac{1}{2}\big[\Lcal(\wt_n^\rho)  -  \inf_{w\in\R^d}\Lcal(w) \big] \end{multline*}
where the last step holds since $\rho\leq \frac{1}{2}$.
Therefore,
\[\Lcal(\wt_n^\rho)\leq \inf_{w\in\R^d}\Lcal(w) + C' \left[\rho^{1/2} + \rho^{-1/2}\cdot \sqrt{\frac{d\log n}{n}}\,\right] ,\]
which completes the proof of the theorem.

\subsubsection{Another perspective on the regularizer}

The results above suggest that the main source of possible improvements by corruption is the shrinkage induced by the corruption (or, at the population
level, by the regularizer $\reg(w)$). In particular, the results of Lemma~\ref{lem:main} show that, in the linear setting,
the corruption (or the regularizer) lead to an upper bound on $\norm{w}$. We will now examine this connection more closely.

The following lemma verifies that, up to constants, $\reg(w)$ is equivalent to $\norm{w}$.
In a sense, then, we can view regularization with $\reg(w)$ as effectively placing a penalty on $\norm{w}$.
\begin{lemma}\label{lem:Rw}
Suppose Assumptions~\ref{asm:ell} and~\ref{asm:X} hold. 
Then it holds that
\[ \max\{ c_L \cdot \norm{w} , \ell(0)\}  \leq \reg(w) \leq c_U \cdot \norm{w} + \ell(0)\textnormal{ for all $w\in\R^d$},\]
where $c_L,c_U$ depend only on the constants in Assumptions~\ref{asm:ell} and~\ref{asm:X}.
\end{lemma}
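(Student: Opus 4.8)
The plan is to reduce both inequalities to two-sided control of $\mathbb{E}|X^\top w|$, and to treat the three ``loss-side'' estimates separately from the ``distribution-side'' estimate. The bound $\reg(w)\geq\ell(0)$ is immediate from midpoint convexity of $\ell$: pointwise, $\ell(0)=\ell\big(\tfrac12 X^\top w+\tfrac12(-X^\top w)\big)\leq \tfrac12\ell(X^\top w)+\tfrac12\ell(-X^\top w)$, and taking expectations gives it. For the upper bound, I use that $\ell$ is nonincreasing and $L$-Lipschitz, so $\ell(t)\leq\ell(0)+L(-t)_+$ for all $t\in\R$; adding the contributions of $X^\top w$ and $-X^\top w$ and noting $(-s)_++(s)_+=|s|$ yields $\tfrac12\big(\ell(X^\top w)+\ell(-X^\top w)\big)\leq \ell(0)+\tfrac L2|X^\top w|$, hence $\reg(w)\leq\ell(0)+\tfrac L2\,\mathbb{E}|X^\top w|$. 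For the lower bound, I use $\ell(t)\geq\ell(0)+\gamma|t|\geq\gamma|t|$ for $t\leq 0$ together with $\ell\geq 0$: at least one of $X^\top w,-X^\top w$ is $\leq 0$, so $\ell(X^\top w)+\ell(-X^\top w)\geq\gamma|X^\top w|$, giving $\reg(w)\geq\tfrac\gamma2\,\mathbb{E}|X^\top w|$.

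It then remains to show $c\,\norm{w}\leq\mathbb{E}|X^\top w|\leq C\,\norm{w}$ for constants depending only on $a_0,a_1,a_2$; by homogeneity it suffices to bound $\mathbb{E}|X^\top u|$ above and below by constants over unit vectors $u\in\mathbb{S}^{d-1}$. The upper bound is routine: by Jensen, $e^{a_0\,\mathbb{E}|X^\top u|^2}\leq\mathbb{E}\big[e^{a_0|X^\top u|^2}\big]\leq a_1$, so $\mathbb{E}|X^\top u|\leq\sqrt{\mathbb{E}|X^\top u|^2}\leq\sqrt{a_0^{-1}\log a_1}$.

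The only genuinely non-routine step, and the one I would flag as the main obstacle, is the lower bound $\mathbb{E}|X^\top u|\geq c$, an anti-concentration estimate that has to come from the second part of Assumption~\ref{asm:X}. The idea is to convert the control of $\mathbb{E}\big[e^{-t|X^\top u|}\big]$ into a small-ball bound: since $e^{-t|X^\top u|}\geq e^{-1}\One{|X^\top u|\leq 1/t}$, we get $\PP{|X^\top u|\leq 1/t}\leq e a_2/t$ for every $t>0$, i.e.\ $\PP{|X^\top u|\leq s}\leq e a_2 s$ for every $s>0$. Then $\mathbb{E}|X^\top u|=\int_0^\infty\PP{|X^\top u|>s}\,ds\geq\int_0^{s_0}\big(1-e a_2 s\big)\,ds$ with the choice $s_0=\tfrac{1}{2e a_2}$, which evaluates to the positive constant $\tfrac{3}{16 e a_2}$.

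Combining the three loss-side inequalities with this two-sided control of $\mathbb{E}|X^\top w|$ yields the claim with, e.g., $c_L=\tfrac{3\gamma}{32\, e a_2}$ and $c_U=\tfrac L2\sqrt{a_0^{-1}\log a_1}$ (the case $w=0$ being trivial since $\reg(0)=\ell(0)$). A minor bookkeeping point to check along the way is that $a_1\geq 1$, so that $\log a_1\geq 0$ and the upper-bound constant is well defined; this holds because $\mathbb{E}\big[e^{a_0|X^\top u|^2}\big]\geq 1$.
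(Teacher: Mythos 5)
Your proof is correct and follows essentially the same route as the paper: decompose $\reg(w)$ via the Lipschitz and $\gamma$-growth properties of $\ell$ (plus convexity for $\reg(w)\geq\ell(0)$), then control $\EE{|X^\top w|}$ two-sidedly using Assumption~\ref{asm:X}, with the upper bound via Jensen exactly as in the paper. The only variation is the lower bound $\EE{|X^\top u|}\geq c$: the paper applies Jensen directly to $e^{-2a_2|X^\top u|}$ to get $\EE{|X^\top u|}\geq\frac{\log 2}{2a_2}$, while you pass through the small-ball estimate $\PP{|X^\top u|\leq s}\leq e a_2 s$ and integrate the tail---equally valid (and note your integral actually evaluates to $\tfrac{3}{8ea_2}$ rather than $\tfrac{3}{16ea_2}$, but since you only claim the smaller constant this is harmless).
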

\begin{proof} 
In the calculations~\eqref{eqn:E_bound} and~\eqref{eqn:E_lower_bound}
appearing in the proof of Lemma~\ref{lem:main}, we will see that Assumption~\ref{asm:X} implies that
\[\frac{\log 2}{2a_2} \leq \EE{|X^\top u|}\leq \sqrt{\frac{a_1}{a_0}}\]
for all unit vectors $u\in\R^d$.
For any $w\in\R^d$, for the lower bound, we have
\begin{multline*}
 \reg(w) = \bigEE{\frac{\ell(|X^\top w|)+\ell(-|X^\top w|)}{2}} \geq \bigEE{\frac{\ell(-|X^\top w|)}{2}}\geq \bigEE{\frac{\ell(-|X^\top w|) - \ell(0)}{2}} \\\geq \frac{\gamma}{2}\cdot \EE{|X^\top w|} \geq \frac{\gamma\log 2}{4a_2}\cdot\norm{w},\end{multline*}
and furthermore 
\[\reg(w) = \bigEE{\frac{\ell(|X^\top w|)+\ell(-|X^\top w|)}{2}} \geq \ell(0)\]
by convexity of $\ell$. For the upper bound, 
we have
\begin{multline*}
 \reg(w)  = \bigEE{\frac{\ell(|X^\top w|)+\ell(-|X^\top w|)}{2}} \\
 =\ell(0) +  \bigEE{\frac{\ell(-|X^\top w|) - \ell(0)}{2}} +  \bigEE{\frac{\ell(|X^\top w|) - \ell(0)}{2}}\\
  \leq \ell(0) +  \bigEE{\frac{\ell(-|X^\top w|) - \ell(0)}{2}} 
  \leq \ell(0) + \frac{L}{2} \cdot \EE{|X^\top w|} \leq \ell(0) +  \frac{L}{2}   \sqrt{\frac{a_1}{a_0}} \cdot \norm{w}.\end{multline*}

\end{proof}\section{Simulations}\label{sec:sim}

Now we empirically investigate the effect of corruption through a simulation.\footnote{Code to reproduce
this simulation is available at \url{https://www.stat.uchicago.edu/~rina/code/corrupted_labels_sim.R}.} We generate the data $\left\{(X_i,Y_i)\right\}_{1 \leq i \leq n}$ in the following way:
choosing dimension $d=50$, we draw
\begin{align*}
X_i &\sim \mathcal{N}(0,\mathbf{I}_d)\\
Y_i\mid X_i&= \begin{cases} +1,&\textnormal{ with probability }\frac{\exp\{3X_{i1} + 0.5(X_{i2})^3\}}{1+\exp\{3X_{i1} + 0.5(X_{i2})^3\}},\\
 -1,&\textnormal{ with probability }\frac{1}{1+\exp\{3X_{i1} + 0.5(X_{i2})^3\}},\end{cases}
\end{align*}
independently for each $i=1,\dots,n$. 
The corrupted labels $\{\Yt_i\}_{1 \leq i \leq n}$ are generated as
\[\Yt_i\mid X_i,Y_i = \begin{cases} -Y_i, &\text{ with prob.~}\rho,\\ Y_i, &\text{ with prob.~}1-\rho,\end{cases}\]
independently for each $i=1,\dots,n$.
We run the experiment at a small and large sample size, $n=400$ and $n=2000$, 
and at a range of values of the corruption probability, $\rho\in\{0,0.01,0.02,\dots,0.2\}$. For each
sample size $n$ and corruption level $\rho$, we run 100 independent trials of the experiment,
we choose the logistic loss function $\ell(t) = \log(1 + e^{-t})$,
and compute the corrupted empirical minimizer $\wt_n^\rho$ defined in~\eqref{eqn:emp_min} and the penalized population-level minimizer
$\wt_*^\rho$ as in~\eqref{eqn:pop_min} (which reduces to the uncorrupted empirical minimizer $\wh_n$ and the unpenalized population-level minimizer $w_*$,
respectively, in the case $\rho=0$). Note that the data generating distribution does not follow the logistic regression model
(due to the cubic term), and so the logistic loss simply acts as a surrogate for the 0-1 loss (i.e., it
does not correspond to a likelihood for some well-specified model).
\begin{figure}[t]
\begin{center}
\includegraphics[width=\textwidth]{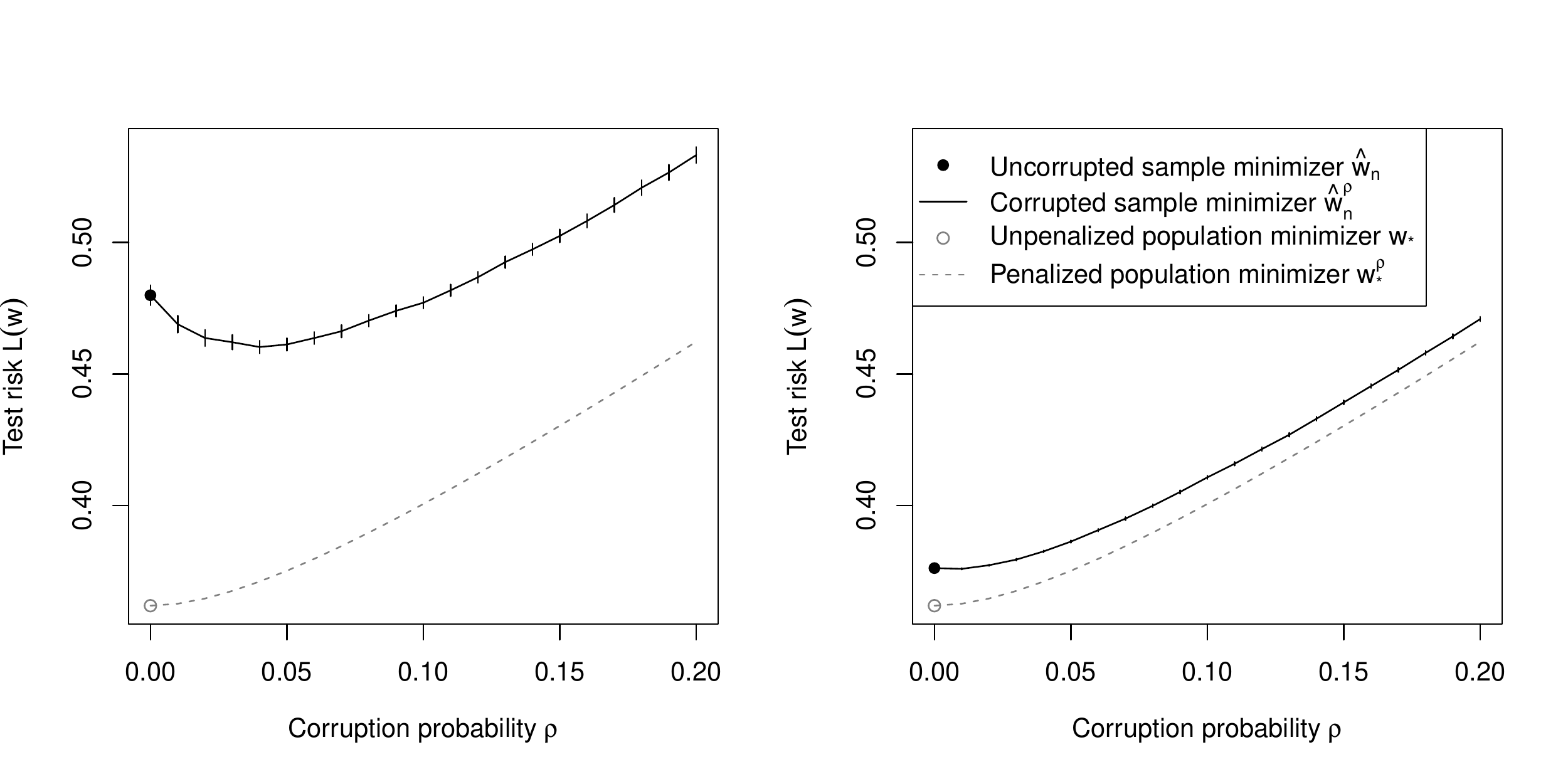}
\caption{Risks of the original classifier $\wh_n$, the corrupted classifier $\wt_n^\rho$, the optimal classifier $w_*$, and the population-level corrupted classifier $\wts^\rho$ on the test set, with sample size $n=400$ (left) and $n=2000$ (right). For the sample estimators $\wh_n$ and $\wt_n^\rho$, the figure displays the mean over 100 independent
trials, with standard error bars.
See Section~\ref{sec:sim} for further details.}
\label{fig:plot1}
\end{center}
\end{figure}

Figure~\ref{fig:plot1} shows the performance of the corrupted estimator $\wt_n^\rho$ and its population-level version $\wt_*^\rho$, across
the range of corruption values $\rho\in\{0,0.01,0.02,\dots,0.2\}$, at each sample size $n\in\{400,2000\}$; the result at $\rho=0$ is highlighted
in each case, as it corresponds to the uncorrupted estimator $\wh_n$ and to the corresponding population-level minimizer $w_*$.
Overall, the plots illustrate how corruption acts as regularization---for the smaller sample size $n=400$, we see
that a small amount of corruption substantially reduces the test risk
 of the empirical minimizer $\wt_n^\rho$, while for the larger sample size $n=2000$ the uncorrupted estimator $\wh_n$
achieves good performance and we no longer see any noticeable improvement from corruption. For the population-level minimizers, on the other hand,
increasing regularization always leads to an increase in risk, as expected.

\section{Discussion}
In this work, we have shown that the corruption of labels has a regularization-type effect on binary classification problems, leading to a possibility of an improvement of the fitted classifier in terms of test risk. Unlike many prior works that apply adjustment or correction to achieve consistency or robustness of the estimator, our result  implies that corruption itself can be beneficial without any adjustment to the estimation process, and thus it could be better in some cases to simply fit the corrupted dataset without any modification on the methods---in particular, this means that we do not need to know or estimate the corruption mechanism, as would be the case for a procedure that corrects for the corruption. For the fitting of linear classifiers using empirical risk minimization under homogeneous noise, Theorem~\ref{thm:main} provides an explanation for the possibility of corruption being beneficial, illustrating the tradeoff between loss approximation and the estimation. 

We can expect a similar tradeoff for more general settings where the noise is not homogeneous, or where different estimation methods are applied; in general, it is intuitive that a small amount of corruption can reduce the chance of overfitting, especially when the inherent noise level is low, and that this benefit may outweigh the low bias that is introduced.
As an example of a broader setting where this type of phenomenon may be useful, we can consider a setting where some data points are known to be ``clean'' while others are potentially corrupted; while we might expect that performance could be improved by removing or down-weighting the latter data points in order to avoid or reduce the effect
of corruption, our findings instead suggest that the presence of the non-``clean'' data might even be beneficial.

The question of corrupted labels, with its possible risks and benefits, is studied only in a very specific setting in our work (i.e., linear prediction rules in low dimensions), and many open questions remain. First,  noting that the corrupted loss can be thought as another surrogate of 0-1 loss, we may ask how corruption affects the prediction performance of the estimator in terms of misclassification rate, i.e., 0-1 risk. 
Second, do similar phenomena occur in the high-dimensional regime, $d\gg n$ or $d\propto n$? In particular, we have seen that homogeneous corruption
mimics an $\ell_2$ penalty in the low-dimensional setting; however, the same is not immediately true in high dimensions, since these results rely on concentration
type arguments that would no longer hold (and, in particular, for $d\gg n$, in general both the uncorrupted data $\{(X_i,Y_i)\}_{1\leq i\leq n}$ and the corrupted data $\{(X_i,\Yt_i)\}_{1\leq i\leq n}$
are perfectly linearly separable, so we cannot expect good performance without some additional constraints or regularization).
Finally, since the key phenomenon underlying our results is the way that homogeneous corruption mimics $\ell_2$ regularization (and therefore, 
corruption induces shrinkage in the resulting estimator), this does not explain any potential benefits from corruption 
if we instead use methods such as a $k$-nearest-neighbor estimator, or other methods where there is no notion of shrinkage; is corruption beneficial
more broadly, by reducing the chance of overfitting in a more general sense? We leave these questions for future work.

\subsection*{Acknowledgements}
R.F.B. was partially supported by the National Science Foundation via grants
DMS-1654076 and DMS-2023109, and by the Office of Naval Research via grant  N00014-20-1-2337.

\bibliography{bib}

\appendix

\section{Additional proofs}
\subsection{Proof of Lemma~\ref{lem:main}}

We first verify that $\Lt$ is $\beta$-Lipschitz, where $\beta = L\sqrt{\frac{a_1}{a_0}}$. 
For any $w\neq w'\in\R^d$ we have
\begin{align*}
\left|\Lt(w) - \Lt(w')\right| &=\left|\EE{\ell(X^\top w\cdot \Yt) - \ell(X^\top w'\cdot \Yt)}\right|\\
&\leq \bigEE{\left|\ell(X^\top w\cdot \Yt) - \ell(X^\top w'\cdot \Yt)\right|} \\
&\leq  \bigEE{L \cdot\left|X^\top w\cdot \Yt-X^\top w'\cdot \Yt\right|}\textnormal{\quad since $\ell$ is $L$-Lipschitz by Assumption~\ref{asm:ell}}\\
&= L\bigEE{\left|X^\top (w-w')\right|}\textnormal{\quad since $\Yt\in\{\pm 1\}$}\\
&=  L \norm{w-w'} \cdot \EE{|X^\top u|}\textnormal{\quad where $u = \frac{w-w'}{\norm{w-w'}}$}\\
&\leq\beta\cdot \norm{w-w'},
\end{align*}
where the last inequality follows from Assumption~\ref{asm:X} via the calculation
\begin{equation}\label{eqn:E_bound}
a_1 \geq \bigEE{e^{a_0|X^\top v|^2}} \geq a_0\cdot\EE{|X^\top v|^2} \geq a_0\cdot\EE{|X^\top v|}^2.
\end{equation}
We therefore have that $\Lt$ is $\beta$-Lipschitz. Note that the above argument also holds for $\rho = 0$, implying that $\Lcal$ is also $\beta$-Lipschitz.

Now fix $t = C_0\rho^{-1/2}$ for any $C_0>\sqrt{\frac{8c_1a_2^2}{c_2\gamma \log 2}}$. We will show that, for any $u\in\mathbb{S}^{d-1}$,
\[\Lt(t\cdot u) > \Lt(0.5t \cdot u).\]
First we calculate
\[ \bigEE{|X^\top u|\cdot \One{X^\top u\cdot \Yt<0}} \geq \rho \cdot \EE{|X^\top u|}\geq \rho\cdot\frac{\log 2}{2a_2}\]
where the first inequality holds by definition of the distribution of the corrupted label $\Yt$
(since $\PPst{\Yt=+1}{X} \in [\rho,1-\rho]$ holds almost surely), 
while for the second inequality, by Jensen's inequality together with Assumption~\ref{asm:X},
\[e^{-2a_2\EE{|X^\top u|}}\leq \EE{e^{-2a_2|X^\top u|}} \leq \frac{a_2}{2a_2} = \frac{1}{2},\]
so 
\begin{equation}\label{eqn:E_lower_bound}
\EE{|X^\top u|} \geq \frac{\log 2}{2a_2}.
\end{equation}
We also know that
\[\ell(-t\cdot |X^\top u|)  - \ell(-0.5t\cdot |X^\top u|)\geq \gamma \cdot 0.5t \cdot |X^\top u|,\]
by Assumption~\ref{asm:ell}, and so
\begin{multline*}\bigEE{\big(\ell(t\cdot X^\top u\cdot\Yt) - \ell(0.5t\cdot X^\top u\cdot\Yt)\big) \cdot \One{X^\top u\cdot \Yt< 0}}\\ \geq \bigEE{ \gamma \cdot 0.5t \cdot |X^\top u|\cdot \One{X^\top u\cdot \Yt<0}}\geq\gamma \cdot 0.5t \cdot \rho \cdot \frac{\log 2}{2a_2}  .\end{multline*}
We therefore have
\begin{align*}
&\Lt(t\cdot u) - \Lt(0.5t\cdot u)\\
&= \bigEE{\ell(t\cdot X^\top u\cdot\Yt) - \ell(0.5t\cdot X^\top u\cdot\Yt)}\\
&= \bigEE{\big(\ell(t\cdot X^\top u\cdot\Yt) - \ell(0.5t\cdot X^\top u\cdot\Yt)\big) \cdot \One{X^\top u\cdot \Yt< 0}}\\
&\hspace{1in} +  \bigEE{\big(\ell(t\cdot X^\top u\cdot\Yt) - \ell(0.5t\cdot X^\top u\cdot\Yt)\big) \cdot \One{X^\top u\cdot \Yt\geq 0}}\\
&\geq \gamma \cdot 0.5t \cdot \rho \cdot \frac{\log 2}{2a_2}+ \bigEE{\big(\ell(t\cdot |X^\top u|) - \ell(0.5t\cdot |X^\top u|)\big) \cdot \One{X^\top u\cdot \Yt\geq 0}}\\
&\geq \gamma \cdot 0.5t \cdot \rho \cdot \frac{\log 2}{2a_2}- \bigEE{\ell(0.5t\cdot |X^\top u|)}\\
&\geq \gamma \cdot 0.5t \cdot \rho \cdot \frac{\log 2}{2a_2}-c_1 \bigEE{e^{-c_2\cdot 0.5t\cdot |X^\top u|}}\textnormal{\ by Assumption~\ref{asm:ell}}\\
&\geq \gamma \cdot 0.5t \cdot \rho \cdot \frac{\log 2}{2a_2} -  \frac{c_1a_2}{c_2\cdot 0.5t}\textnormal{\ by Assumption~\ref{asm:X}}\\
&>0\textnormal{\ by definition of $t$}.
\end{align*}
In particular, this implies that $\Lt(tu)>\inf_{w\in\R^d}\Lt(w)$ for all $u\in\mathbb{S}^{d-1}$.
Since $w\mapsto \Lt(w)$ is continuous as shown above, this implies that $\Lt(w)$ attains its infimum, and any $\wts^\rho\in\argmin_{w\in\R^d}\Lt(w)$ 
must satisfy $\norm{\wts^\rho}\leq t$.

Next we bound $\Lcal(\wts^\rho)$ for any $\wts^\rho\in\argmin_{w\in\R^d}\Lt(w)$. First note that the corrupted risk can be written as
\begin{equation}\label{eqn:rewrite_pen_loss}\Lt(w) =  (1-2\rho)\cdot \Lcal(w) + 2\rho\cdot \reg(w) = (1-\rho)\Lcal(w) + \rho\Lcal(-w).\end{equation}
Applying~\eqref{eqn:rewrite_pen_loss} with $w=\wts^\rho$ we obtain
\[\Lt(\wts^\rho)  = (1-\rho)\Lcal(\wts^\rho) + \rho\Lcal(-\wts^\rho),\]
and similarly applying~\eqref{eqn:rewrite_pen_loss} with $w=-\wts^\rho$ we obtain
\[\Lt(-\wts^\rho) = (1-\rho)\Lcal(-\wts^\rho) + \rho\Lcal(\wts^\rho).\]
Since $\Lt(\wts^\rho)\leq\Lt(-\wts^\rho)$ by optimality of $\wts^\rho$, and $\rho< \frac{1}{2}$ by assumption, this proves that
$\Lcal(\wts^\rho) \leq \Lcal(-\wts^\rho)$ and therefore, 
\[\Lcal(\wts^\rho)\leq \Lt(\wts^\rho).\]
Next, fix any $w\in\R^d$. First consider the case that $\norm{w}\leq  c \rho^{-1/2}$, where $c = \sqrt{\frac{c_1 a_2}{2\beta c_2}}$. Then
\begin{align*}
\Lt(\wts^\rho) - \Lcal(w)
&\leq \Lt(w) - \Lcal(w)\textnormal{\quad by optimality of $\wts^\rho$}\\
&= \rho\left(\Lcal(-w) - \Lcal(w)\right) \textnormal{\quad by~\eqref{eqn:rewrite_pen_loss}}\\
&\leq 2\rho\beta\cdot  c \rho^{-1/2}\\
&= 2 \beta c \rho^{1/2},
\end{align*}
where the last inequality holds since $\Lcal$ is $\beta$-Lipschitz.

Next consider the case that $\norm{w}> c \rho^{-1/2}$.
Let $u=w/\norm{w}$ and $t =  c \rho^{-1/2}$.
Then by the reasoning above, we have
\[\Lt(\wts^\rho) - \Lcal(tu) \leq  2 \beta  c \rho^{1/2}.\]
Next, let $Z_u = X^\top u \cdot Y$, then we have
\begin{align*}
& \Lcal(tu)  - \Lcal(w)
= \EE{\ell(t\cdot Z_u) - \ell(\norm{w}\cdot Z_u)}\\
&= \EE{\left(\ell(t\cdot Z_u) - \ell(\norm{w}\cdot Z_u)\right)\cdot \One{Z_u > 0}} + \EE{\left(\ell(t\cdot Z_u) - \ell(\norm{w}\cdot Z_u)\right)\cdot \One{Z_u < 0}}\\
&\leq \EE{\left(\ell(t\cdot Z_u) - \ell(\norm{w}\cdot Z_u)\right)\cdot \One{Z_u > 0}}\textnormal{\quad since $\norm{w}>t$ and $\ell$ is nonincreasing}\\
&\leq \EE{\ell(t\cdot Z_u) \cdot \One{Z_u > 0}}\textnormal{\quad since $\ell$ is nonnegative}\\
&\leq c_1 \EE{e^{-c_2 t |X^\top u|}}\textnormal{\quad by Assumption~\ref{asm:ell}}\\
&\leq c_1 \cdot \dfrac{a_2}{c_2 t}\textnormal{\quad by Assumption~\ref{asm:X}}\\
&= \frac{c_1 a_2}{c_2 c} \cdot \rho^{1/2}.
\end{align*}
Therefore, for this second case, we have shown that
\[\Lt(\wts^\rho) - \Lcal(w) \leq \left(2\beta c + \frac{c_1 a_2}{c_2 c}\right) \cdot  \rho^{1/2} =  \sqrt{\frac{8\beta c_1 a_2}{c_2}} \cdot \rho^{1/2}.\]
Combining the two cases, we have shown that
\[\Lcal(\wts^\rho)\leq \Lt(\wts^\rho) \leq  \Lcal(w)+  \sqrt{\frac{8\beta c_1 a_2}{c_2}} \cdot \rho^{1/2} \]
for all $w\in\R^d$, which proves the desired inequality with
\[C_1 = \sqrt{\frac{8\beta c_1 a_2}{c_2}}.\]

Now we turn to the corrupted estimator $\wt^\rho_n$.
First we will need a lemma to establish some concentration results.

\begin{lemma}\label{lem:conc}
Suppose Assumptions~\ref{asm:ell} and~\ref{asm:X} hold. Fix any $\alpha>0$, $\rho\in(0,\frac{1}{2})$,  $t>0$, and $r>0$. Then with probability at least $1-n^{-\alpha}$, 
 it holds that
\begin{equation}\label{eqn:conc1}
\inf_{u\in\mathbb{S}^{d-1}} \left\{\frac{1}{n}\sum_{i=1}^n \max\left\{0 , - X_i^\top u \cdot \tilde{Y}_i\right\}\right\} \geq r_1\rho - r_2 \cdot \frac{d \log n}{n}
\end{equation}
and
\begin{equation}\label{eqn:conc2}
\sup_{u\in\mathbb{S}^{d-1}}\left\{\frac{1}{n}\sum_{i=1}^n e^{- t |X_i^\top u|}\right\} \leq \frac{r_3}{t} + r_4\sqrt{\frac{d\log n}{n}}
\end{equation}
and
\begin{equation}\label{eqn:conc3}
\sup_{\norm{w}\leq r} \left|\Lcalt(w) - \Lt(w)\right|\leq r_5 \cdot r \cdot \sqrt{\frac{d\log n}{n}},
\end{equation}
where $r_1,r_2,r_3,r_4,r_5>0$ depend only on $\alpha$ and on the constants in Assumptions~\ref{asm:ell} and~\ref{asm:X},
and not on $n$, $d$, $r$, or $t$.
\end{lemma}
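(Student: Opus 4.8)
The plan is to prove all three estimates by the same scheme: build a finite $\epsilon$-net of the relevant index set (the sphere $\mathbb{S}^{d-1}$ for \eqref{eqn:conc1}--\eqref{eqn:conc2}, the ball $\{\norm{w}\leq r\}$ for \eqref{eqn:conc3}), prove the desired one-sided concentration at each net point via a Bernstein- or sub-Gaussian-type tail bound, take a union bound, and then pass to the full index set using a Lipschitz or monotonicity argument. Throughout I would condition on the event $\mathcal{E}=\{\max_{i\leq n}\norm{X_i}\leq M_n\}$ with $M_n\asymp\sqrt{d\log n}$, which holds with probability at least $1-\tfrac14 n^{-\alpha}$ by covering $\mathbb{S}^{d-1}$ with a $\tfrac12$-net and applying Markov's inequality to $e^{a_0|X_i^\top u|^2}$ (Assumption~\ref{asm:X}). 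I would also use the two elementary consequences of Assumption~\ref{asm:X} already derived above, $\tfrac{\log 2}{2a_2}\leq\EE{|X^\top u|}\leq\sqrt{a_1/a_0}$ for all $u\in\mathbb{S}^{d-1}$. The failure probability $n^{-\alpha}$ is split evenly across $\mathcal{E}$ and the three net-union-bound events.

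For \eqref{eqn:conc1}, the summand is $g_i(u):=|X_i^\top u|\cdot\One{X_i^\top u\cdot\Yt_i<0}$, which is $\norm{X_i}$-Lipschitz in $u$. The key idea is to keep only the contribution of the genuinely corrupted points: using the representation $\Yt_i=(1-R_i)Y_i+R_iZ_i$ with $R_i\sim\textnormal{Bernoulli}(2\rho)$, $Z_i\sim\textnormal{Uniform}\{\pm1\}$ introduced earlier, and fixing a constant $M_0$ (depending only on $a_0,a_1,a_2$) large enough that $\EE{\min\{|X^\top u|,M_0\}}\geq\tfrac{\log 2}{4a_2}$ for every $u$, one has the pointwise bound $g_i(u)\geq\phi_i(u):=\min\{|X_i^\top u|,M_0\}\cdot\One{R_i=1}\cdot\One{X_i^\top u\cdot Z_i<0}$. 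The $\phi_i(u)$ are i.i.d., lie in $[0,M_0]$, have mean at least $\rho\cdot\tfrac{\log 2}{4a_2}$ and variance at most $M_0\EE{\phi_i(u)}\leq M_0^2\rho$. Bernstein's inequality gives, at each fixed $u$, a lower-tail deviation of order $\sqrt{\rho D/n}+D/n$ with $D=\log(1/\delta)$; choosing $\delta$ so the union bound over an $\epsilon$-net with $\epsilon\asymp M_n^{-1}\cdot d\log n/n$ succeeds makes $D\asymp d\log n$, and the arithmetic--geometric mean inequality $\sqrt{\rho\cdot d\log n/n}\leq\tfrac12(\text{const}\cdot\rho+\text{const}\cdot d\log n/n)$ converts this into the stated form $r_1\rho-r_2\,d\log n/n$. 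The Lipschitz continuation on $\mathcal{E}$ costs only $M_n\epsilon\lesssim d\log n/n$, absorbed into $r_2$.

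For \eqref{eqn:conc2}, the obstacle — and I expect this to be the hardest step — is uniformity over \emph{all} $t>0$ with $t$-free constants, since a plain $t$-dependent Lipschitz net introduces a spurious $\sqrt{d\log t/n}$ term. Instead I would use $e^{-t|X_i^\top u|}=t\int_0^\infty e^{-ts}\One{|X_i^\top u|\leq s}\,ds$, giving
\[\sup_{u\in\mathbb{S}^{d-1}}\frac1n\sum_{i=1}^n e^{-t|X_i^\top u|}\ \leq\ t\int_0^\infty e^{-ts}\Bigl(\sup_{u\in\mathbb{S}^{d-1}}\tfrac1n\textstyle\sum_{i=1}^n\One{|X_i^\top u|\leq s}\Bigr)\,ds,\]
and then bound the slab-count $N(u,s):=\tfrac1n\sum_i\One{|X_i^\top u|\leq s}$ uniformly by $c_1 s+c_2\sqrt{s\cdot d\log n/n}+c_3\,d\log n/n$. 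Its mean is at most $\PP{|X^\top u|\leq s}\leq e\,a_2 s$ (optimize $\PP{|X^\top u|\leq s}\leq e^{\tau s}\EE{e^{-\tau|X^\top u|}}\leq e^{\tau s}a_2/\tau$ over $\tau$), so its variance is also $O(s)$, and a net over $u$ (continuing the indicator via $|X_i^\top u'|\geq|X_i^\top u|-M_n\norm{u-u'}$) together with monotonicity in $s$ and Bernstein's inequality yields exactly this variance-aware bound. Substituting and integrating gives $\tfrac{c_1}{t}+\tfrac{c_2\sqrt\pi}{2}\sqrt{d\log n/(tn)}+c_3\,d\log n/n$, and one more AM--GM step on the middle term produces $r_3/t+r_4\sqrt{d\log n/n}$ with $r_3,r_4$ independent of $t$.

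For \eqref{eqn:conc3}, the summand $\ell(X_i^\top w\cdot\Yt_i)$ is $L\norm{X_i}$-Lipschitz in $w$, and $\ell(X_i^\top w\cdot\Yt_i)-\ell(0)$ is bounded in absolute value by $L|X_i^\top w|\leq Lr\,|X_i^\top u_w|$ with $u_w=w/\norm{w}$, hence sub-Gaussian with proxy $\lesssim(Lr)^2/a_0$ by Assumption~\ref{asm:X}. Thus for each fixed $w$ with $\norm{w}\leq r$ a sub-Gaussian tail bound gives $|\Lcalt(w)-\Lt(w)|\lesssim Lr\sqrt{\log(1/\delta)/n}$; taking a union bound over an $\epsilon$-net of $\{\norm{w}\leq r\}$ with $\epsilon\asymp r/(M_n\sqrt n)$ contributes $\log$-cardinality $\asymp d\log n$ and a continuation cost $\leq LM_n\epsilon\lesssim Lr/\sqrt n\lesssim Lr\sqrt{d\log n/n}$ on $\mathcal{E}$, which yields \eqref{eqn:conc3}. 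The only genuinely delicate point in the whole lemma is the $t$-uniform handling of \eqref{eqn:conc2}, where the naive net fails and one must use the integral representation together with the sharp small-ball estimate $\PP{|X^\top u|\lesssim s}\lesssim s$.
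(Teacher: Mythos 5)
Your plan is correct in outline, but it is a genuinely different route from the paper's. For \eqref{eqn:conc1} and \eqref{eqn:conc3} the paper never builds a net: it truncates ($\bX_i$), then bounds the \emph{expected} supremum by symmetrization plus the Rademacher contraction inequality, reducing everything to $\EE{\norm{\frac1n\sum_i \xi_i R_i Z_i \bX_i}}$ (which carries the factor $\sqrt{\rho}$ for \eqref{eqn:conc1}) and $\EE{\norm{\frac1n\sum_i\xi_i X_i}}$, and then concentrates the supremum around its mean via Bousquet's inequality (for \eqref{eqn:conc1}) and a Lipschitz/sub-Gaussian concentration bound of Kontorovich (for \eqref{eqn:conc3}); your discretize-and-union-bound scheme with Bernstein at net points and the constant truncation level $M_0$ achieves the same $\sqrt{\rho\, d\log n/n}$ and $r\sqrt{d\log n/n}$ rates, at the cost of having to control $\max_i\norm{X_i}$ for the Lipschitz continuation (do intersect with $\mathcal{E}$ rather than literally condition on it, and pick the net resolution as, e.g., $\eps\asymp r\sqrt{d\log n/n}/(LM_n)$ so the log-cardinality stays of order $d\log n$ without assuming $d$ polynomial in $n$). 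The most notable divergence is \eqref{eqn:conc2}: the difficulty you flag as the hardest step is not actually present, because the lemma fixes $t$ before the probability statement---only the \emph{constants} $r_3,r_4$ must be $t$-free, not the event---and the paper dispatches large $t$ by monotonicity of $t\mapsto\sup_u\frac1n\sum_i e^{-t|X_i^\top u|}$, reducing to $t\leq\sqrt{n/(d\log n)}$, after which the ``naive'' truncation-plus-net-plus-Hoeffding argument (with $\eps=n^{-2.5}$) works with constants free of $t$. Your integral representation $e^{-t|X_i^\top u|}=t\int_0^\infty e^{-ts}\One{|X_i^\top u|\leq s}\,ds$ combined with a uniform, variance-aware small-ball bound on the slab counts is valid (the estimate $\PP{|X^\top u|\leq s}\leq e\,a_2 s$ is correct, and the grid-in-$s$ plus enlarged-slab continuation can be made rigorous), and it actually proves a stronger, $t$-uniform statement---but it is considerably heavier than what the lemma requires, whereas the paper's monotonicity reduction buys the same conclusion with one line.
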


We are now ready to prove the remainder of Lemma~\ref{lem:main}. First we bound $\norm{\wt^\rho_n}$.
Define $C = \frac{2r_2}{r_1}$ and fix $t = C_0\rho^{-1/2}$ for any $C_0>\max\left\{2\sqrt{\frac{4c_1\left(2c_2^{-1}r_3\right)}{\gamma r_1}},\frac{8c_1\left(C^{-1/2} r_4\right)}{\gamma r_1}\right\}$, which therefore
satisfies
\[C_0>\sqrt{\frac{4c_1\left(2c_2^{-1}r_3+C_0 C^{-1/2} r_4\right)}{\gamma r_1}}.\] 
We will show that, for any $u\in\mathbb{S}^{d-1}$,
\[\Lcalt(t\cdot u) > \Lcalt(0.5t \cdot u).\]
Then assuming $\rho\geq C\cdot\frac{d\log n}{n}$,
the bound~\eqref{eqn:conc1} in Lemma~\ref{lem:conc}
implies that 
\[\frac{1}{n}\sum_{i=1}^n |X_i^\top u|\cdot\One{X_i^\top u\cdot \Yt_i <0}  = \frac{1}{n}\sum_{i=1}^n \max\left\{0 , - X_i^\top u \cdot \tilde{Y}_i\right\}\geq \frac{r_1}{2}\cdot \rho,\]
for all $u\in\mathbb{S}^{d-1}$. Furthermore, since $t= C_0\rho^{-1/2}$,
the bound~\eqref{eqn:conc2} in Lemma~\ref{lem:conc} (applied with $0.5c_2t$ in place of $t$) together with our assumption $\rho\geq C\cdot\frac{d\log n}{n}$
implies that 
\[\frac{1}{n}\sum_{i=1}^n e^{- c_2 \cdot 0.5 t |X_i^\top u|} \leq \frac{2c_2^{-1}r_3+C_0 C^{-1/2} r_4}{t}\]
for all $u\in\mathbb{S}^{d-1}$. Following identical arguments as in the population case, we have
\[\Lcalt(t\cdot u) - \Lcalt(0.5t\cdot u)\\
\geq \gamma \cdot 0.5t \cdot \rho \cdot r_1/2 -  c_1\cdot \frac{2c_2^{-1}r_3+C_0 C^{-1/2} r_4}{t}
>0\]
for all $u\in\mathbb{S}^{d-1}$, where the last step holds by definition of $t$ and of $C_0$. Since $\Lcalt$ is continuous (because we have assumed the loss $\ell$ is continuous),
as for the population case this again proves that
$\Lcalt(w)$ must attain its infimum, and that any $w\in\argmin_{w\in\R^d}\Lcalt(w)$ 
must satisfy $\norm{w}\leq t$.

Finally,  the bound $\sup_{\norm{w}\leq C_0\rho^{-1/2}}\left|\Lcalt(w)-\Lt(w)\right|\leq C_2 \rho^{-1/2}\sqrt{\frac{d\log n}{n}}$
follows immediately from the bound~\eqref{eqn:conc3} in Lemma~\ref{lem:conc}, by setting $C_2 = C_0r_5$.

\subsection{Proof of Lemma~\ref{lem:conc}}

First, we prove~\eqref{eqn:conc1}. The distribution of $(X,\Yt)$ can equivalently be represented as
\[(X,\Yt) = \big(X, (1-R)\cdot Y + R\cdot Z\big),\]
where $R\sim\textnormal{Bernoulli}(2\rho)$ is generated independently from $(X,Y)$,
and $Z\sim\textnormal{Unif}\{\pm 1\}$  is generated independently from $(X,Y,R)$.
Let $(X_i,Y_i,R_i,Z_i)$ generate the $n$ i.i.d.~data points. 
Furthermore, define 
\[\bX = X\cdot\min\left\{1, \frac{ 4\EE{\norm{X}}}{\norm{X}}\right\}.\]
and
\[\bX_i = X_i\cdot\min\left\{1, \frac{ 4\EE{\norm{X}}}{\norm{X_i}}\right\}.\]
Then we can check that, for all $u\in\mathbb{S}^{d-1}$,
\[\frac{1}{n}\sum_{i=1}^n \max\left\{0 , - X_i^\top u \cdot \tilde{Y}_i\right\} \geq\frac{1}{n}\sum_{i=1}^n \max\left\{0 , - \bX_i^\top u \cdot \tilde{Y}_i\right\} \geq \frac{1}{n}\sum_{i=1}^n \max\left\{0 , - \bX_i^\top u \cdot R_i \cdot Z_i\right\}.\]
Define
\[\Delta = \sup_{u\in\mathbb{S}^{d-1}} \left|\frac{1}{n}\sum_{i=1}^n \max\left\{0 , - \bX_i^\top u \cdot R_i \cdot Z_i\right\} - \EE{\max\left\{0,-\bX^\top u\cdot R\cdot Z\right\}}\right|.\]
We can verify that, since $\bX,R,Z$ are independent, by definition of their distributions we have
\[ \bigEE{\max\left\{0,-\bX^\top u\cdot R\cdot Z\right\}} \geq \rho\cdot \bigEE{|\bX^\top u|}.\]
Furthermore, by Jensen's inequality, 
\begin{multline*}\exp\left\{-4a_2\bigEE{|\bX^\top u|}\right\} \leq \bigEE{e^{-4a_2|\bX^\top u|}} \leq  \bigEE{e^{-4a_2|X^\top u|}}+ \PP{\norm{X}>4\EE{\norm{X}}} \\\leq \frac{a_2}{4a_2} + \frac{\EE{\norm{X}}}{4\EE{\norm{X}}}=\frac{1}{2},\end{multline*}
where the last inequality applies Assumption~\ref{asm:X} together with Markov's inequality. Rearranging terms, then,
\[ \bigEE{|\bX^\top u|} \geq \frac{\log 2}{4a_2}.\]
Therefore, combining everything we have shown so far, it holds deterministically that
\[\inf_{u\in\mathbb{S}^{d-1}}\left\{\frac{1}{n}\sum_{i=1}^n \max\left\{0 , - X_i^\top u \cdot \tilde{Y}_i\right\} \right\}\geq \rho \cdot \frac{\log 2}{4a_2} - \Delta.\]
Now we need to bound $\Delta$ with high probability.

By the symmetrization inequality \citep[Theorem 2.1]{koltchinskii2011oracle} we have
\[\EE{\Delta}\leq 2 \bigEE{\sup_{u\in\mathbb{S}^{d-1}} \left|\frac{1}{n}\sum_{i=1}^n \xi_i\cdot \max\left\{0 , - \bX_i^\top u \cdot R_i\cdot Z_i\right\}\right|},\]
where the last expectation is taken
with respect to the i.i.d.~data $(\bX_i,\Yt_i)$ as well as i.i.d.~Rademacher random variables $\xi_i\iidsim\textnormal{Unif}\{\pm 1\}$.
Since $t\mapsto \max\{0,-t\}$ is $1$-Lipschitz, the  contraction inequality  \citep[Theorem 2.2]{koltchinskii2011oracle}
verifies that
\[\EE{\Delta}\leq 4 \bigEE{\sup_{u\in\mathbb{S}^{d-1}} \left|\frac{1}{n}\sum_{i=1}^n \xi_i\cdot \bX_i^\top u \cdot R_i\cdot Z_i\right|}.\]
Furthermore, deterministically we have
\[ \left|\frac{1}{n}\sum_{i=1}^n \xi_i \cdot \bX_i^\top u \cdot R_i\cdot Z_i \right|
= \left|u^\top \left(\frac{1}{n}\sum_{i=1}^n \xi_i \cdot R_i\cdot Z_i \cdot \bX_i\right)\right|
\leq \bignorm{\frac{1}{n}\sum_{i=1}^n \xi_i \cdot R_i\cdot Z_i \cdot \bX_i}, \]
and so combining everything so far, we have shown that
\[\EE{\Delta}\leq 4 \bigEE{\bignorm{\frac{1}{n}\sum_{i=1}^n \xi_i \cdot R_i\cdot Z_i \cdot \bX_i}}.\]
Moreover, we can see that $(\bX_i,\xi_i\cdot Z_i)$ is equal in distribution
to $(\bX_i,\xi_i)$ (since $Z_i\in\{\pm 1\}$ while $\xi_i\sim\textnormal{Unif}\{\pm 1\}$ is drawn independently from the data),
and so
\[\EE{\Delta}\leq 4 \bigEE{\bignorm{\frac{1}{n}\sum_{i=1}^n \xi_i \cdot \bX_i \cdot R_i}}.\]
Finally,
\begin{multline*}\bigEE{\bignorm{\frac{1}{n}\sum_{i=1}^n \xi_i  \cdot \bX_i\cdot R_i}}^2\leq\bigEE{\bignorm{\frac{1}{n}\sum_{i=1}^n \xi_i  \cdot \bX_i \cdot R_i}^2}
=\frac{1}{n^2}\sum_{j=1}^d \bigEE{\left(\sum_{i=1}^n \bX_{ij}R_i\xi_i\right)^2} \\= \frac{1}{n^2}\sum_{j=1}^d \sum_{i=1}^n\EE{\bX_{ij}^2R_i^2} = \frac{1}{n^2}\sum_{i=1}^n 2\rho\EE{\norm{\bX_i}^2} \leq \frac{1}{n}\cdot 16\EE{\norm{X}}^2\cdot 2\rho,
\end{multline*}
since by definition, it holds deterministically that $\norm{\bX_i}\leq 4\EE{\norm{X}}$, while $R_i\sim\textnormal{Bernoulli}(2\rho)$ is independent from $X_i$. Combining everything so far,
\[\EE{\Delta}\leq 4\sqrt{\frac{1}{n}\cdot 16\EE{\norm{X}}^2\cdot 2\rho}.\]
Next, since for all $u\in\mathbb{S}^{d-1}$ we have
\[\EE{\max\left\{0 , - \bX^\top u \cdot R\cdot Z\right\}^2} \leq  2\rho \cdot \big(4\EE{\norm{X}}\big)^2\]
and
\[0\leq \max\left\{0 , - \bX^\top u \cdot R\cdot Z\right\} \leq 4\EE{\norm{X}}\text{ almost surely},\]
applying  \citep[Bousquet bound, Section 2.3]{koltchinskii2011oracle} yields the concentration result
\begin{multline*}\bigPP{\Delta \leq \EE{\Delta} + \sqrt{\frac{2\log(3n^{\alpha})\cdot\left(  2\rho \cdot 16\EE{\norm{X}}^2 +   4\EE{\norm{X}}\cdot 2\EE{\Delta} \right)}{n}} + 4\EE{\norm{X}}\cdot \frac{\log(3n^{\alpha})}{3n}}\\ \geq 1-\frac{1}{3n^\alpha}.\end{multline*}
Furthermore, Assumption~\ref{asm:X} together with Jensen's inequality implies \[e^{a_0\EE{\norm{X}^2}/d}\leq e^{a_0\max_{1 \leq j \leq d} \EE{|X_j|^2}} \leq \max_{1 \leq j \leq d} \EE{e^{a_0|X_j|^2}}\leq a_1\] and so  $\EE{\norm{X}}\leq\EE{\norm{X}^2}^{1/2}\leq \sqrt{\frac{d \log a_1}{a_0}}$.
Combined with our bound on $\EE{\Delta}$, we can verify that  this bound can be relaxed to
\[\bigPP{\Delta \leq  r' \left(\sqrt{\rho \cdot \frac{d\log n}{n}} + \frac{d\log n}{n}\right)} \geq 1-\frac{1}{3n^\alpha}\]
where $r'$ is chosen appropriately as a function of $\alpha$, $a_0$, and $a_1$.
Therefore, we have shown that with probability at least $1-\frac{1}{3n^\alpha}$,
\[\inf_{u\in\mathbb{S}^{d-1}}\left\{\frac{1}{n}\sum_{i=1}^n \max\left\{0 , - X_i^\top u \cdot \tilde{Y}_i\right\} \right\}\geq \rho \cdot \frac{\log 2}{4a_2} -  r' \left(\sqrt{\rho \cdot \frac{d\log n}{n}} + \frac{d\log n}{n}\right),\]
which is sufficient to verify~\eqref{eqn:conc1} with $r_1,r_2$ chosen appropriately,
since it holds that $\sqrt{\rho \cdot \frac{d\log n}{n}}  \leq \frac{r''\rho}{2} + \frac{d\log n}{2r'' n}$ for all $r''>0$.

Next we prove~\eqref{eqn:conc2}. 
Note that, comparing the two terms in the desired upper bound and noting that $1/t$ is only dominant if $t\leq \sqrt{\frac{n}{d\log n}}$, we can see that it suffices to prove the result for $t\leq \sqrt{\frac{n}{d\log n}}$,
since $t\mapsto \sup_{u\in\mathbb{S}^{d-1}}\left\{\frac{1}{n}\sum_{i=1}^n e^{- t |X_i^\top u|}\right\}$ is monotone nonincreasing in $t$.

We have
\[\sup_{u\in\mathbb{S}^{d-1}}\left\{\frac{1}{n}\sum_{i=1}^n e^{- t |X_i^\top u|}\right\}\leq \sup_{u\in\mathbb{S}^{d-1}}\left\{\frac{1}{n}\sum_{i=1}^n e^{- t |\bX_i^\top u|}\right\}
,\]
where, changing the definition of $\bX$ and $\bX_i$, we let
\[\bX = X\cdot\min\left\{1, \frac{t\EE{\norm{X}}}{\norm{X}}\right\}.\]
and analogously
\[\bX_i = X_i\cdot\min\left\{1, \frac{t\EE{\norm{X}}}{\norm{X_i}}\right\}.\] 
Next fix $\eps>0$, and take a covering $u_1,\dots,u_M$ of $\mathbb{S}^{d-1}$ such that 
\[\sup_{u\in\mathbb{S}^{d-1}}\left\{\min_{m=1,\dots,M}\norm{u-u_m}\right\}\leq \eps.\]
By \citet[Chapter 15]{lorentz1996constructive}, for any $\eps>0$ we can construct a set with this property of size $M\leq (3/\eps)^d$.
Then for any $u\in\mathbb{S}^{d-1}$, if we find $m$ such that $\norm{u-u_m}\leq \eps$, we have
\[e^{- t |\bX_i^\top u|}  \leq e^{- t |\bX_i^\top u_m|} +  t\norm{\bX_i}\cdot \eps\leq e^{- t |\bX_i^\top u_m|} +  t^2\EE{\norm{X}}\cdot \eps,\]
since $e^{-t|x|}$ is $t$-Lipschitz over $x\in\R$.
Therefore,
\[\sup_{u\in\mathbb{S}^{d-1}}\left\{\frac{1}{n}\sum_{i=1}^n e^{- t |X_i^\top u|}\right\}
\leq  t^2\EE{\norm{X}}\cdot \eps + \max_{m=1,\dots,M}\left\{\frac{1}{n}\sum_{i=1}^n e^{- t |\bX_i^\top u_m|}\right\}.\]
Next, for each $m$, by Hoeffding's inequality,
\[\bigPP{\frac{1}{n}\sum_{i=1}^n e^{- t |\bX_i^\top u_m|}-\EE{e^{- t |\bX^\top u_m|}}> \sqrt{\frac{\log(3Mn^{\alpha})}{2n}}} \leq \frac{1}{3Mn^{\alpha}}.\]
Furthermore,
\[\EE{e^{- t |\bX^\top u_m|}} 
\leq \EE{e^{- t |X^\top u_m|}} + \PP{\norm{X}>t\EE{\norm{X}}} \leq \frac{a_2+1}{t},\]
by applying Assumption~\ref{asm:X} together with Markov's inequality.
Therefore, combining everything, with probability at least $1-\frac{1}{3n^\alpha}$,
\[\sup_{u\in\mathbb{S}^{d-1}}\left\{\frac{1}{n}\sum_{i=1}^n e^{- t |X_i^\top u|}\right\}
\leq  t^2\EE{\norm{X}}\cdot \eps  + \sqrt{\frac{\log(3 \cdot (3/\eps)^d \cdot n^{\alpha})}{2n}} + \frac{a_2+1}{t}.\]
Since we have assumed that $t\leq n$, taking $\eps = n^{-2.5}$ we obtain
\[\sup_{u\in\mathbb{S}^{d-1}}\left\{\frac{1}{n}\sum_{i=1}^n e^{- t |X_i^\top u|}\right\}
\leq  \frac{\EE{\norm{X}}}{\sqrt{n}}+ \sqrt{\frac{\log(3 \cdot (3n^{2.5})^d \cdot n^{\alpha})}{2n}} + \frac{a_2+1}{t},\]
which clearly satisfies~\eqref{eqn:conc2} with $r_3,r_4$ chosen appropriately, since as shown before, $\EE{\norm{X}}\leq  \sqrt{\frac{d \log a_1}{a_0}}$.

Finally we prove~\eqref{eqn:conc3}. 
We first bound the quantity in the expected value. 
We have
\begin{multline*}
\bigEE{\sup_{\norm{w}\leq r} \left|\Lcalt(w) - \Lt(w)\right|}
=\bigEE{\sup_{\norm{w}\leq r} \left|\frac{1}{n}\sum_{i=1}^n \left( \ell(X_i^\top w \cdot \Yt_i) - \EE{\ell(X_i^\top w \cdot \Yt_i)}\right)\right|}\\
\leq 2\bigEE{\sup_{\norm{w}\leq r} \left|\frac{1}{n}\sum_{i=1}^n \xi_i \ell(X_i^\top w \cdot \Yt_i) \right|},
\end{multline*}
by the symmetrization inequality \citep[Theorem 2.1]{koltchinskii2011oracle}, where the last expectation is taken
with respect to the i.i.d.~data $(\bX_i,\Yt_i)$ as well as i.i.d.~Rademacher random variables $\xi_i\iidsim\textnormal{Unif}\{\pm 1\}$.
Next, the  contraction inequality  \citep[Theorem 2.2]{koltchinskii2011oracle}
verifies that
\[\bigEE{\sup_{\norm{w}\leq r} \left|\frac{1}{n}\sum_{i=1}^n \xi_i \ell(X_i^\top w \cdot \Yt_i) \right|} \leq 
2L\bigEE{\sup_{\norm{w}\leq r} \left|\frac{1}{n}\sum_{i=1}^n \xi_i \cdot X_i^\top w \cdot \Yt_i \right|},\]
since $\ell$ is $L$-Lipschitz by Assumption~\ref{asm:ell}. Furthermore, deterministically we have
\[ \left|\frac{1}{n}\sum_{i=1}^n \xi_i \cdot X_i^\top w \cdot \Yt_i \right|
= \left|w^\top \left(\frac{1}{n}\sum_{i=1}^n \xi_i \cdot \Yt_i \cdot X_i\right)\right|
\leq \norm{w}\cdot \bignorm{\frac{1}{n}\sum_{i=1}^n \xi_i \cdot \Yt_i \cdot X_i}, \]
and so combining everything so far, we have shown that
\[\bigEE{\sup_{\norm{w}\leq r} \left|\Lcalt(w) - \Lt(w)\right|}\leq 4Lr \bigEE{\bignorm{\frac{1}{n}\sum_{i=1}^n \xi_i \cdot \Yt_i \cdot X_i}}.\]
Moreover, we can see that $(X_i,\xi_i\cdot\Yt_i)$ is equal in distribution
to $(X_i,\xi_i)$ (since $\Yt_i\in\{\pm 1\}$ while $\xi_i\sim\textnormal{Unif}\{\pm 1\}$ is drawn independently from $(X_i,\Yt_i)$),
and so
\[\bigEE{\sup_{\norm{w}\leq r} \left|\Lcalt(w) - \Lt(w)\right|}\leq 4Lr \bigEE{\bignorm{\frac{1}{n}\sum_{i=1}^n \xi_i  \cdot X_i}}.\]
Finally,
\begin{multline*}\bigEE{\bignorm{\frac{1}{n}\sum_{i=1}^n \xi_i  \cdot X_i}}^2\leq\bigEE{\bignorm{\frac{1}{n}\sum_{i=1}^n \xi_i  \cdot X_i}^2}
=\frac{1}{n^2}\sum_{j=1}^d \bigEE{\left(\sum_{i=1}^n X_{ij}\xi_i\right)^2} \\= \frac{1}{n^2}\sum_{j=1}^d \sum_{i=1}^n\EE{X_{ij}^2} = \frac{1}{n}\EE{\norm{X}^2} \leq \frac{d}{n} \cdot \frac{\log a_1}{a_0},
\end{multline*}
since $\EE{\norm{X}^2}\leq {\frac{d \log a_1}{a_0}}$ as calculated above.
Therefore,
\[\bigEE{\sup_{\norm{w}\leq r} \left|\Lcalt(w) - \Lt(w)\right|}\leq \frac{4Lr\sqrt{\log a_1}}{\sqrt{a_0}}\cdot\sqrt{\frac{d}{n}}.\]

Next we prove that the quantity $\sup_{\norm{w}\leq r} \left|\Lcalt(w) - \Lt(w)\right|$ concentrates around its expectation.
First, let $(X',\Yt')$ be an i.i.d.~draw from the distribution of $(X,\Yt)$. For $\lambda\geq 0$,
we calculate
\begin{multline*}
\bigEE{\frac{1}{2}e^{\lambda \norm{X\Yt-X'\Yt'}} + \frac{1}{2}e^{-\lambda\norm{X\Yt-X'\Yt'}}}
\leq \bigEE{e^{\lambda^2 \norm{X\Yt-X'\Yt'}^2/2}}\\
\leq \bigEE{e^{\lambda^2 \cdot (\norm{X\Yt}^2 + \norm{X'\Yt'}^2)}}
= \bigEE{e^{\lambda^2 \cdot \norm{X\Yt}^2}}^2
= \bigEE{e^{\lambda^2 \cdot \norm{X}^2}}^2\\
= \bigEE{e^{\lambda^2 \cdot \sum_{j=1}^d |X_j|^2}}^2
\leq  \bigEE{\frac{1}{d}\sum_{j=1}^d e^{d\lambda^2 \cdot  |X_j|^2}}^2,
\end{multline*}
by the AM--GM inequality.
Applying Assumption~\ref{asm:X}, we then obtain
\[\bigEE{\frac{1}{2}e^{\lambda \norm{X\Yt-X'\Yt'}} + \frac{1}{2}e^{-\lambda\norm{X\Yt-X'\Yt'}}}
 \leq a_1^{\frac{2\lambda^2 d}{a_0}}\]
 as long as $\lambda^2\leq a_0/d$.
Following the proof of \citet[Theorem 1]{kontorovich2014concentration}, 
since $\sup_{\norm{w}\leq r} \left|\Lcalt(w) - \Lt(w)\right|$ is a $\frac{Lr}{n}$-Lipschitz function of each data point product $X_i\cdot\Yt_i$,
\begin{multline*}\bigPP{\sup_{\norm{w}\leq r} \left|\Lcalt(w) - \Lt(w)\right| - \bigEE{\sup_{\norm{w}\leq r} \left|\Lcalt(w) - \Lt(w)\right|}> \frac{Lr}{n} \cdot \sqrt{\frac{8nd \log a_1 \cdot \log(3n^\alpha)}{a_0}}}\\
\leq \exp\left\{\frac{2n\lambda^2 d\log a_1}{a_0} - \lambda \cdot \sqrt{\frac{8nd \log a_1 \cdot \log(3n^\alpha)}{a_0}} \right\}.\end{multline*}
Taking 
\[\lambda =  \frac{a_0}{4nd\log a_1}\cdot \sqrt{\frac{8nd \log a_1 \cdot \log(3n^\alpha)}{a_0}}\]
(which clearly satisfies $\lambda \leq \sqrt{\frac{a_0}{d}}$ for sufficiently large $n$), this probability is bounded by $\frac{1}{3n^\alpha}$.
(If instead $n$ is not sufficiently large (i.e.,  $\lambda > \sqrt{\frac{a_0}{d}}$), then the guarantee~\eqref{eqn:conc3} holds trivially.)
Combining everything, and choosing $r_5$ appropriately, we have established~\eqref{eqn:conc3}.

\end{document}